\def\beq{\begin{equation}}
\def\eeq{\end{equation}}
\def\beqa{\begin{eqnarray}}
\def\eeqa{\end{eqnarray}}
\def\balign{\begin{align}}
\def\ealign{\end{align}}
\def\bmat{\begin{matrix}}
\def\emat{\end{matrix}}
\def\bmli{\begin{multline}}
\def\emli{\end{multline}}
\def\OO{\mathcal O}
\def\T{\mathcal T}
\def\Ne{\mathbb{N}}
\def\Ze{\mathbb{Z}}
\def\R{\mathbb{R}}
\def\Pe{\mathbb{P}}
\def\Ee{\mathbb{E}}
\def\al{\alpha}
\def\be{\beta}
\def\ga{\gamma}
\def\del{\delta}
\def\la{\lambda}
\def\La{\Lambda}
\def\om{\omega}
\def\Om{\Omega}
\def\vphi{\varphi}
\def\p{\partial} 
\def\nea{\nearrow}
\def\sea{\searrow}
\def\12{{\textstyle{1\over2}}}
\def\we{\wedge}
\newtheorem{lemma}{Lemma}
\newtheorem{theorem}{Theorem}
\newtheorem{proposition}{Proposition}
\newtheorem{remark}{Remark}
\newtheorem{corollary}{Corollary}
\newcounter{personaln}
\newcommand{\cone}{\mathscr{C}}
\begin{document}
\title{A Gibbsian random tree with nearest neighbour interaction}
\author[P. Collet]{Pierre Collet}
\address{Centre de Physique Théorique (CNRS, UMR 7644)\\ 
CPHT, Ecole Polytechnique, 91128 Palaiseau cedex, France}
\email{pierre.collet@cpht.polytechnique.fr}
\author[F. Dunlop]{Fran\c cois Dunlop}
\address{Laboratoire de Physique Th{\'e}orique et Modélisation (CNRS,
  UMR 8089)\\ 
CY Cergy Paris Universit{\'e}, 95302 Cergy-Pontoise\\France}
\email{francois.dunlop@cyu.fr}
\author[T. Huillet]{Thierry Huillet}
\email{thierry.huillet@cyu.fr}
\author[A. Mardin]{Arif Mardin}
\address{
Nesin Matematik Köyü,
Şirince mahallesi, 7, Kayserkaya Sokak,
35920 Selçuk, İZMİR,
Turkey.}
\email{mardin.arif@gmail.com}
\begin{abstract}
We revisit the random tree model with nearest-neighbour interaction as described in \cite{DM22}, enhancing growth. When the underlying free Bienaymé-Galton-Watson (BGW) model is sub-critical, we show that the (non-Markov) model with interaction exhibits a phase transition between sub- and super-critical regimes. In the critical regime, using tools from dynamical systems, we show that the partition function of the model approaches a limit at rate $n^{-1}$ in the generation number $n$. In the critical regime with almost sure extinction, we also prove that the mean number of external nodes in the tree at generation $n$ decays like $n^{-2}$. Finally, we give a spin representation of the random tree, opening the way to tools from the theory of Gibbs states, including FKG inequalities. We extend the construction in \cite{DM22} when the law of the branching mechanism of the free BGW process has unbounded support.
\end{abstract}
\date{\today}
\maketitle
\noindent{\bf Keywords:} Galton-Watson; Gibbs random tree; criticality; fixed point; correlation inequalities; FKG; extinction.

\section{Introduction}\label{intro}

We extend the random tree process with nearest-neighbour interaction as
described in \cite{DM22} to the case of entire
branching mechanisms, not necessarily polynomial. For this process favouring growth,
the current law of a tree is Gibbsian, namely the one of the free BGW tree
tilted by a Boltzmann weight function at inverse temperature $\beta =\log b>0
$ involving the energy of past nearest neighbour productive individuals (Section
2). We show that this (non-Markov) process with interaction exhibits a phase
transition between sub- and super-critical regimes only when the underlying
free BGW process is sub-critical. The transition of its eventual extinction
probability is discontinuous at some $b=b_{c}>1$. The computation of $b_{c}$
requires the one of the fixed points of the underlying transformation as a
two-dimensional dynamical system giving the evolution of the partition
function (Sections 3 and 4). In Sections 5 and 6, focus is on the approach
to the critical point obtained when the two fixed points of the subcritical
regime merge. Using the fact that the critical fixed point is partially
hyperbolic in the sense of Takens, we show that the partition function of
the model approaches a limit at rate $n^{-1}$ in the generation number $n$
(Section 5). In the critical regime with almost sure extinction, we also
prove that the mean number of external nodes in the tree at generation $n$
decays like $n^{-2}$ (Section 6). These results confirm and extend the
initial guesses discussed in \cite{DM22}. Section 7
illustrates heuristically our results on a simple particular case amenable
to a one-dimensional problem.
Finally, in Section 8, we give a spin representation of the random tree,
opening the way to tools from the theory of Gibbs states, including FKG
inequalities. We extend the construction in %
\cite{DM22} when the law of the branching mechanism of the free BGW process
has unbounded support. 
Correlation inequalities (Griffiths, GKS, FKG, Lebowitz,...) are important tools in the
mathematical analysis of statistical mechanical models as well as in the rigorous
studies of quantum field theoretical models \cite{Battle}.
The FKG (for Fortuin, Kasteleyn and Ginibre) inequality is surely the most prominent one.
Since it is essentially a result of the kind: if two summable functions/random variables
are monotonous ( i.e. both non-decreasing or non-increasing) then they are
positively correlated, in other words their covariance is positive, it has already
made its place in introductory books on probability theory, \cite{Berger}.
We prove the FKG inequality for our model which involves a nearest-neighbour pair
interaction energy.

\section{Model}\label{model}
Recall\  $\Ne=\{0,1,2,\dots\},\ \Ze_+=\{1,2,\dots\}$. For $n\in\Ne$, let $\T_n$
denote the set of planar rooted trees $\om$ where the distance to the root,
also called height, is bounded by $n+1$. By convention the links between
neighbouring connected nodes have length
one. At distance $n+1$ from the root, one finds the offspring of the nodes which
lie at distance $n$ from the root, if any. These offspring are considered
immature, carrying no information other than their number, and do not belong to
the set of nodes $\{i\in\om\}$. They will be called external nodes. 
The distance from a node $i$ to the root is denoted $|i|$. For any tree
$\om\in\T_n$, for any node $i\in\om$, implying $|i|\le n$, let
$X_i=X_i(\om)\in\Ne$ denote the number of offspring of $i$, namely the number of
neighbouring nodes away from the root. The root is node 0. Nodes are labeled \`a
la Neveu \cite{N86}:
\beq
X_0,X_1,\dots X_{X_0},X_{11},\dots X_{1X_1},X_{21},\dots X_{2X_2},\dots 
\eeq
Except for the root, the distance of a node to the root equals its number of
digits in Neveu notation. We denote $a(i)$ the parent (first ancestor) of $i$,
namely the neighbouring node towards the origin.

A non-interacting probability measure on $\T_n$ is defined as follows. As it is
a measure on a discrete set, it is enough to specify the probability of
an atom, a tree $\om$.
Let $(p_k)_{k=0}^\infty$ be a partition of unity, $p_k\ge0$ and $\sum_kp_k=1$.
For definiteness assume $p_0>0$ and $p_0+p_1<1$. The non-interacting probability
of a tree $\om\in\T_n$ will be
\beq\label{BGW}
\Pe^{GW}(\om)=\prod_{i\in\om}p_{X_i(\om)}\,.
\eeq
The superscript $GW$ is for Galton-Watson. The normalization of the probability
is easily verified by induction over $n$. Then, given a pair interaction energy
\beq\label{phi1}
\Ne\times\Ne\,\ni(X,Y)\longmapsto\,\vphi(X,Y)\,\in\R
\eeq
and a boundary condition $X_{a(0)}=x\in\Ze_+$ specifying the offspring of
a virtual ancestor for the origin, we define a Hamiltonian with first ancestor
interaction,
\beq\label{H}
H_n^x(\om)=\sum_{i\in\om} \vphi(X_{a(i)},X_i)
\eeq
and an interacting probability measure on $\T_n$,
\beq\label{Pn}
\Pe^x_n(\om)=\Bigl(\Xi^x_n\Bigr)^{-1}\Pe^{GW}(\om)
e^{-\be H_n^x(\om)}\,,
\eeq 
\beq\label{Xin}
\Xi^x_n=\sum_{\om\in\T_n}\Pe^{GW}(\om)e^{-\be H_n^x(\om)}
\eeq
where $\be\ge0$ is the inverse temperature. Accordingly, for any observable
$f(\om)$,
\beq\label{EEE}
\Ee_n^xf=\sum_{\om\in\T_n}\Pe^x_n(\om)f(\om)\,.
\eeq
Observe that, even for the BGW measure in the subcritical case, the measure is supported by an infinite number of trees.

From (\ref{Pn})(\ref{Xin}), there is no one-step transition probability for $\Pe^x_n(\om)$: {\    the process with interaction is not given by a Markov transition kernel.
Given the nearest neighbour character of the interaction, it may however be described as a Markov field. The spin representation in Section \ref{spin} could be used to establish the Markov field property.} {\     Indeed the hard core (indicator) factor in (\ref{muGW}) making a BGW tree from a spin configuration is a nearest neighbour interaction.}

Our main example is
\beq\label{22}
\vphi(X,Y)=-1_{X\ge2}1_{Y\ge2} 
\eeq
where the indicator function $1_A$ takes value one if event $A$ occurs  and zero
otherwise. This choice of interaction is motivated by an analogy with the Ising model. 

{\    
Our motivation is complementary to that of the original BGW model and subsequent applications.
The BGW model, besides being a simple genealogy model, has been used to describe the growth, possibly limited by extinction, of various networks, notably social networks and spread of epidemics. We assume that a node, once created, remains alive, so that the set of nodes in a BGW model can only grow or freeze. It is then natural to consider the possibility of interaction between neighbouring nodes on the BGW tree, and the Gibbs formalism is well suited for this purpose. It may be particularly relevant as the second stage of a phenomenon where the first stage is the birth and growth of a network à la BGW until saturation at some height or time $n$ followed by a second stage aiming at equilibrium with the chosen interaction. A dynamics for the second stage would obey the detailed balance condition with respect to the Gibbs measure \cite{DM22}. The time scales of the two stages may be different, forbidding merging the two.
}

{\   

Tilting a reference measure for trees by a Gibbs factor is not new: this problem was addressed in \cite{Steele} with the counting measure on Cayley trees for reference measure and an interaction equal to the number of leaves (vertices of coordination number one). 
The interaction (\ref{H}) is a two-body potential. A possible one-body potential 
could be the total number of nodes, obtained by adding a constant to $\varphi$ in (\ref{H}), 
or the total number of leaves as in  \cite{Steele}.
}
\section{A dynamical system}\label{recursion}
Here we assume (\ref{22}) and $n\ge0$. For a tree $\om\in\T_n$, 
let $N_n$ be the number of external nodes,
\beq\label{Nn}
N_n=\sum_{i\in\om\atop|i|=n}X_{i_1\dots i_n}\,.
\eeq
Let $N_n=L_n+Q_n$ where $L_n$ denotes the number of external nodes
whose parent has one offspring, and $Q_n$ denotes the 
number of external nodes whose parent has two or more offspring. For $u,v>0$ let
\beq\label{Xinuv}
\Xi^x_n(u,v)=\sum_{\om\in\T_n}\Pe^{GW}(\om)e^{-\be H_n^x(\om)}u^{L_n}v^{Q_n}
\eeq
with $H_n^x(\om)$ as (\ref{H})-(\ref{22}), implying
$$
\Xi^x_n(u,v)=\Xi^2_n(u,v)\quad \forall\ x\ge2\,.
$$
The partition function (\ref{Xin}) is $\Xi^x_n(1,1)$. Let $b=e^\be\ge1$ and
\beqa\label{Xi01}
\Xi_0(u,v)\equiv\left(\bmat\Xi^1_0(u,v)\\\Xi^2_0(u,v)\emat\right)
=\left(\bmat p_0+p_1u+R(v)\\p_0+p_1u+bR(v)\emat\right)\equiv F(u,v)\,,
\eeqa
defining $F$, where we assume
\beq
R(v):=\sum_{k=2}^\infty p_kv^k\ <\infty\quad \forall\,v
\eeq
so that $F(\cdot)$ maps $[1,\infty)\times[1,\infty)$ into itself and more
precisely into $D=\{(u,v):\ 1\leq u\leq v\leq bu\}$. Then for $n\ge1$,
summing over the choice of external nodes at distance $n+1$ (equivalently over
the $X_i$'s with $|i|=n$) yields, for $x=1,\,2$,
\beq\label{XinFn}
\Xi^x_n(u,v)=\Xi^x_{n-1}(F(u,v))=\dots=\Xi^x_0\Bigl(F^{(n)}(u,v)\Bigr)\,.
\eeq
Equivalently, starting from the root,
\begin{align}\label{root}
\Xi_n=&
\left(\bmat\Xi^1_n\\\Xi^2_n\emat\right)
=\left(\bmat p_0+p_1\Xi_{n-1}^1+R(\Xi_{n-1}^2)\\p_0+p_1\Xi_{n-1}^1+bR(\Xi_{n-1}^2)
\emat\right)
=F(\Xi_{n-1}^1,\Xi_{n-1}^2)=F(\Xi_{n-1})\cr
&{   
=\dots=F^{(n)}(\Xi_0)=F^{(n+1)}(\Xi_{-1})=F^{(n+1)}(u,v)
}
\end{align}
where the arguments $(u,v)$ have been omitted for legibility except at the end.
We have not defined the model with $n=-1$ but the initial condition at $n=0$ of
the recurrence, equation (\ref{Xi01}), can be pushed to $n=-1$, with
$\Xi_{-1}(u,v)=(u,v)$.

If for some value of $b$ the orbit of $(1,1)$ under iteration of $F$ goes to infinity as $n\to\infty$,
then from (\ref{Pn})(\ref{Xin})(\ref{XinFn}), for any constant, the probability that the total number of nodes is less than the constant goes to zero as $n\to\infty$. We say that the model is super-critical.

Otherwise the model is sub-critical or critical. We define $\T_\infty$
as the set of finite planar rooted trees.
If $\om\in\T_p$ then $H_n^x(\om)=H_p^x(\om)$, whatever $n\ge p$.
For $\om\in\T_\infty$, we define $H_\infty^x(\om)=\lim_{n\to\infty}H_n^x(\om)$.
For any $\om\in\T_\infty$, for $n>|\om|$, the probability to observe $\om$ in $\T_n$, given by (\ref{Pn}), obeys
\beqa\label{pos}
\Pe^x_n(\om)={\Pe^{GW}(\om)e^{-\be H_n^x(\om)}\over\sum_{\om'\in\T_n}\Pe^{GW}(\om')e^{-\be H_n^x(\om')}}\sea{\Pe^{GW}(\om)e^{-\be H_\infty^x(\om)}\over\sum_{\om'\in\T_\infty}\Pe^{GW}(\om')e^{-\be H_\infty^x(\om')}}=\Pe^x_\infty(\om)>0.\cr\hfill
\eeqa
Indeed, the limit denominator in (\ref{pos}) is finite in the subcritical or critical regimes.

If the underlying BGW is super-critical, the interacting model is
also super-critical, as can be shown by correlation inequalities, section
\ref{spin}.

Our main motivation is criticality with interaction. We therefore assume that
the underlying BGW model with branching mechanism $p_0+p_1\,v+R(v)$ is sub-critical:
\beq\label{gwsub}
\bar k:=\sum_0^\infty k\,p_k=p_1+R'(1)<1\,.
\eeq

{\   
As a consequence of Theorem  \ref{approach}  below, we obtain the following proposition:
\begin{proposition} 
Suppose the underlying free BGW process is subcritical or critical.
Then, the eventual extinction probability of the Gibbs tilted process
\[
\rho =\lim_{n\rightarrow \infty }\Pe^x_n\left( N_{n}=0\right) \text{ exists}
\]
with,
\beqa
\rho=\left\{\bmat1&\ {\rm if}\ b\leq b_{c} \\
0&\ {\rm if}\ b>b_{c}>1\emat\right.
\eeqa
\end{proposition}
}
\begin{proof}
Eventual extinction is defined by the event in which the number of external
nodes $N_{n}$ goes to zero as $n$ tends to infinity. In the super-critical
regime, the probability of eventual extinction is nil, due to the divergence
of the partition function, see Theorem \ref{approach} below. In the sub-critical and critical
regimes, the probability of eventual extinction is one due to the
convergence of the partition function, see Theorem \ref{approach}.
There exists thus $b_{c}>1$ such that the probability $\rho $ of
eventual extinction is one for $b\leq b_{c}$ and zero for $b>b_{c}$, a
discontinuous transition.
The computation of $b_{c}:=b_c((p_k)_k)$, is given as the solution of (\ref{fpuv}) and (\ref{fpuv2}) together with $v=v_c,\,u=u_c$ below.

This differs from the continuous transition occurring in the free BGW model
at $\overline{k}=\overline{k}_{c}=1$, as defined in (\ref{gwsub}).
\end{proof}

{\    
The potential (\ref{22}) favors proliferation. Would we change its sign, a 'dual' phase transition for the Gibbs model would require a supercritical free BGW model. 
 We do not consider that case, being concerned here by an interaction model acting as a selection of paths mechanism reinforcing growth of the free process.

}

\section{Fixed point}
A partial order on $D$ is defined by
\beq
(u,v)\ge(u',v')\ \hbox{if and only if}\ u\ge u'\ {\rm and}\ v\ge v'
\eeq
whereby $F$ is partially monotone:
\beq
(u,v)\ge(u',v')\ \Rightarrow\ F(u,v)\ge F(u',v')\,.
\eeq
Denote $(u_n,v_n)=F^{(n)}(1,1)$, starting from $(u_0,v_0)=(1,1)$. We have $(u_1,v_1)\ge(1,1)$ and by induction
$(u_{n+1},v_{n+1})\ge(u_n,v_n)\ \forall\,n$. 
If there is a fixed point $(u_c,v_c)$ in $\{u\ge1,v\ge1\}$ then $F$ being partially monotone implies
\beq
(1,1)\le(u_n,v_n)\le(u_{n+1},v_{n+1})\le(u_c,v_c)
\eeq
 implying convergence.
A fixed point $(u,v)$ of $F$ is given by
\beq\label{fpuv}
u={p_0+R(v)\over1-p_1},\quad v={p_0\over 1-p_1}+{(1-p_1)b+p_1\over 1-p_1}\,R(v)\,.
\eeq
As $R(v)$ is strictly convex for $v\ge0$, there are 0, 1 or 2 real fixed points
in $\{ v\ge1\}$. If there is more than one real fixed point, consider two fixed points $(u_f,v_f)$ and $(u_f',v_f')$, with $(u_f,v_f)<(u_f',v_f')$, then they are partially ordered because $R$ is non-decreasing. The sequence converges to the smaller because $(u_0,v_0)<(u_f,v_f)$
and $F$ is partially monotone.
If there is just one fixed point in $\{u\ge1,v\ge1\}$, the sequence converges to it. 
If there is no real fixed point in $\{ v\ge1\}$, the sequence diverges to infinity. 

The critical regime is the borderline between 0 fixed point and two fixed points
(saddle node bifurcation),
where the bisector line $v$ is tangent to the convex curve given by the right-hand-side
of (\ref{fpuv}), so that at a critical fixed point $(u_c,v_c)$, in addition to
(\ref{fpuv}) we have
\beq\label{fpuv2}
1={(1-p_1)b+p_1\over 1-p_1}\,R'(v)
\eeq
and $(b_c, v_c)$ will be solution of the pair (\ref{fpuv})(\ref{fpuv2}) with $(b,v)$ as unknowns. From (\ref{fpuv2}) we extract
$$
b={1\over R'(v)}-{p_1\over 1-p_1}
$$
whereby (\ref{fpuv}) becomes
\beq
v={p_0\over 1-p_1}+{R(v)\over R'(v)}=f(v)\,.
\eeq
By virtue of (\ref{gwsub}) and $R(1)=1-p_0-p_1$, we have $f(1)>1$.
On the other hand
\beq
R'(v)=\sum_2^\infty kp_kv^{k-1}\ge2{R(v)\over v}
\eeq
so that $f(v)<v$ for $v$ large and $v=f(v)$ has solutions with $v\ge1$.
Hence $(b_c,v_c)$, {    and $u_c$ given by (\ref{fpuv})}.

The fixed point equation can be solved by a series expansion in the spirit of
Lagrange \cite{Com}:
let $z$ be defined as a function of $w$ in terms of a parameter $\alpha $ by 
$z=w+\alpha \phi (z).$
Lagrange's inversion theorem, also called a Lagrange expansion, states
that any function $F$ of $z$ can be expressed as a power series in $\alpha $
which converges for sufficiently small $\alpha $ and has the form 

\[
F(z)=F(w)+\sum_{n\geq 1}\frac{\alpha ^{n}}{n!}\partial _{w}^{n-1}\left[ \phi
(w)^{n}F^{\prime }(w)\right] .
\]

In our context, we need to solve the fixed point equations: 
\begin{eqnarray*}
\left( i\right) \text{ }p_{0}+p_{1}u+R\left( v\right)  &=&u \\
\left( ii\right) \text{ }p_{0}+p_{1}u+bR\left( v\right)  &=&v
\end{eqnarray*}
Substituting $u$ as a function of $v$ in $\left( i\right) $ into $\left(
ii\right) $ and setting $\overline{p}_{1}=1-p_{1},$ $\overline{b}=p_{1}+%
\overline{p}_{1}b$, $z=\overline{p}_{1}v$ and $\overline{R}\left( z\right)
=R\left( z/\overline{p}_{1}\right) $, $\left( ii\right) $ becomes ($%
\overline{R}$ being an entire function), 
\[
z=p_{0}+\overline{b}\,\overline{R}\left( z\right) .
\]
So, with $u\stackrel{\left( i\right) }{=}\left( p_{0}+\overline{R}\left(
z\right) \right) /\overline{p}_{1}=:F\left( z\right) $ and $F^{\prime
}\left( p_{0}\right) =\overline{R}^{\prime }\left( p_{0}\right) /\overline{p}%
_{1}$%
\begin{eqnarray}
\overline{p}_{1}v &=&z=p_{0}+\sum_{n\geq 1}\frac{\overline{b}^{n}}{n!}%
\partial _{p_{0}}^{n-1}\left[ \overline{R}\left( p_{0}\right) ^{n}\right]  \\
u &=&F\left( z\right) =F\left( p_{0}\right) +\sum_{n\geq 1}\frac{\overline{b}%
^{n}}{n!}\partial _{p_{0}}^{n-1}\left[ \overline{R}\left( p_{0}\right)
^{n}F^{\prime }\left( p_{0}\right) \right]  \\
\overline{p}_{1}u &=&p_{0}+\overline{R}\left( p_{0}\right) +\sum_{n\geq 1}%
\frac{\overline{b}^{n}}{n!}\partial _{p_{0}}^{n-1}\left[ \overline{R}\left(
p_{0}\right) ^{n}\overline{R}^{\prime }\left( p_{0}\right) \right] 
\end{eqnarray}
which yields a Taylor expansion of the first fixed point $\left( u,v\right) $
in $\left( i\right), \left( ii\right) $ (i.e. the nearest to $\left(
1,1\right) $) in terms of the control parameter $\overline{b}$ when $b<b_{c}.
$ These series expansions of the critical point are convergent when $b\leq
b_{c}.$ When $b=b_{c}$, the two fixed points merge. This occurs when $%
\overline{b}_{c}:=p_{1}+\overline{p}_{1}b_{c}$, characterized by 
\[
\overline{b}_{c}\overline{R}^{\prime }\left( v_{c}\right) =1
\]
which yields an implicit expression of $b_{c}$.

By a second Lagrange inversion formula ($\overline{R}^{\prime }\left(
0\right) =0$), we have (\cite{Com}, page 159)

\[
v_{c}=\sum_{n\geq 1}\frac{\overline{b}_{c}^{-n}}{n!}\partial
_{v}^{n-1}\left[ \left( \frac{v}{\overline{R}^{\prime }\left( v\right) }%
\right) ^{n}\right] \mid _{v=0}=\sum_{n\geq 1}\frac{\overline{b}_{c}^{-n}}{n}%
\left[ v^{n-1}\right] \left( \frac{v}{\overline{R}^{\prime }\left( v\right) }%
\right) ^{n},
\]
as a Taylor expansion of $v_{c}$ in terms of powers of $\overline{b}%
_{c}^{-1}.$ The Taylor expansion of $u_{c}$\ follows from $%
p_{0}+p_{1}u+R\left( v\right) =u$, so with 
\[
u_{c}=\frac{1}{\overline{p}_{1}}\left( p_{0}+R\left( v_{c}\right) \right) ,
\]
where $R\left( v_{c}\right) $ is obtained from the expansion of $v_{c}$ by Fa%
\`{a} di Bruno formula \cite{Stan}.


From now on, $b=b_c((p_k)_k)$, solution of (\ref{fpuv}) and (\ref{fpuv2}) together with $v=v_c,\,u=u_c$. We have
\beq\label{Rvc}
R'(v_c)={1-p_1\over1+(b-1)(1-p_1)}\equiv R'_c
\eeq
which implies $R'(1)<1-p_1$: the underlying BGW model must be
sub-critical for a critical point to exist.

\section{critical regime: approach to fixed point}\label{cvgvc}
The tangent map near $(u_c,v_c)$, 
\beq
DF=\begin{pmatrix} p_1& R'(v)\\ p_1 & bR'(v)\end{pmatrix}\,,
\eeq
then has eigenvalues 1 and
\beq
\la_2=(b-1)p_1R'_c={(b-1)(1-p_1)p_1\over (b-1)(1-p_1)+1},\qquad 0<\la_2< p_1<1
\eeq
The eigenvector of the eigenvalue $\lambda_{2}$ is proportional to the
vector
$$
\begin{pmatrix}
\frac{-R_{c}^{^{\prime
}} }{p_{1}\left[\left( b-1\right) R_{c}^{^{\prime
}}-1\right]}\\ 
-1
\end{pmatrix}\;.
$$
From the expression of $b=b_{c}$ and, from (\ref{Rvc}), ${1\over b-1}>R_{c}^{^{\prime}}>0$, thus  we conclude 
that the components of this vector have opposite signs.
This corresponds to a partially hyperbolic fixed point, as considered by Takens
\cite{T71}, with $s=1,\,c=1,\,u=0$ in Takens' notation, and one can check that the Sternberg non-resonance conditions are satisfied.
It follows that the center manifold \cite{GH83}\cite{K98} is regular.
We first translate the fixed point to the origin,
\begin{align*}
u&=u_{c}+x\\
v&=v_{c}+y
\end{align*}
so that the map now reads
\beq
G(x,y)=\left(\begin{array}{c}
p_{1}\,x+R'_{c}y+R''_{c}y^{2}/2+\mathcal{O}(y^{3})\\
p_{1}\,x+b\,R'_{c}y+b\,R''_{c}y^{2}/2+\mathcal{O}(y^{3})\\
\end{array}\right)\;.
\eeq
where $R''_c=R''(v_c)$. 
We look for the center manifold in the form of a graph
\beq
x=f(y)=a\,y+c\,y^{2}+\mathcal{O}(y^{3})\;.
\eeq
By definition, if $M$ is in the center manifold then $G(M)$ is also in
the center manifold. Therefore
$$
G((f(y),y))=(f(y'),y')
$$
for some 
\beq\label{gy}
y'=g(y)=g_{1}\,y+g_{2}\,y^{2}+\mathcal{O}(y^{3})\;.
\eeq
Expanding to second order the relation
\beq
G((f(y),y))=(f(g(y)),g(y))\;,
\eeq
we find
\begin{align}\label{acg2}
a&= \frac{1}{(b-1)(1-p_1)+1}={R'_c\over 1-p_1}\;,\cr
c&=-{R''_c\over 2}{(b-1)p_1\over(b-1)(1-p_1)^2+1},\cr
g_{1}&=1\;,\cr
g_{2}&=p_1c+{R_c''b\over2}={R''_c\over 2}
{\bigl[(b-1)(1-p_1)+1\bigr]^2\over(b-1)(1-p_1)^2+1}>0\;.
\end{align}
In order to find the asymptotics of
$$
y_{n+1}=g(y_{n})\,
$$
with $y_{1}<0$ small, we set
$$
z_{n}=\frac{1}{y_{n}}\;,
$$
so that (\ref{gy}) becomes
\begin{equation}\label{iterz}
z_{n+1}=z_{n}-g_{2}+\frac{g_{2}^{2}}{z_{n}}+\mathcal{O}\big(z_{n}^{-2}\big)\;.
\end{equation}
with $z_{n}\searrow-\infty$ as $n\nea+\infty$. Therefore for all
$g_{2}>\epsilon>0$ there exists $n_{1}>1$ so that for all $n>n_{1}$,
$$
\big|\frac{g_{2}^{2}}{z_{n}}+\mathcal{O}\big(z_{n}^{-2}\big)\big|\le \epsilon.
$$ 
Hence $\forall\ n>n_{1}$
$$
z_{n+1}\le z_{n}-g_{2}+\epsilon\;,
$$
$$
z_{n}\le z_{n_{1}}-(n-n_{1})\,(g_{2}-\epsilon)\;.
$$
Inserting into \eqref{iterz} yields, $\forall\,n>n_{1}$,
$$
z_{n+1}=z_{n}-g_{2}+
\frac{\mathcal{O}(1)}{z_{n_{1}}-(n-n_{1})\,(g_{2}-\epsilon)}\;,
$$
$$
z_{n}=-n\,g_{2}+\mathcal{O}(\log n) \;,
$$
$$
y_{n}=-\frac{1}{n\,g_{2}}+\mathcal{O}\left(\frac{\log n}{n^{2}}\right)\;.
$$

From the Theorem in section 1 of \cite{T71} there exists a
diffeomorphism $\Phi$ from a neighbourhood  $\mathcal{V}_{0}$ of 
$(u_{c},v_{c})$ to a neighbourhood
of the origin in $\R^{2}$ such that $F=\Phi^{-1}\circ
N\circ \Phi$, where $N$ is the map
$$
N\begin{pmatrix}
\xi\\
\eta
\end{pmatrix}
=\begin{pmatrix}
\phi(\xi)\\
\la(\xi)\,\eta
\end{pmatrix}\;,
$$
and $\phi$ and $\la$ are regular functions satisfying 
$\phi(0)=0$, $\phi'(0)=1$, and $\la(0)=\lambda_{2}$. 

Since $\lim_{n\to\infty}(u_{n},v_{n})=(u_{c},v_{c})$ there exists
$n_{0}>0$ such that for an $n>n_{0}$ we have $(u_{n},v_{n})\in
\mathcal{V}_{0}$.
For $n>n_{0}$ we define $(\xi_{n},\eta_{n})$ by
$$
\begin{pmatrix}
\xi_{n}\\
\eta_{n}
\end{pmatrix}=\Phi\;\begin{pmatrix}
u_{n}\\
v_{n}
\end{pmatrix}\;.
$$
Since $\Phi$ is continuous we have
$$
\lim_{n\to\infty}(\xi_{n},\eta_{n})=(0,0)\;.
$$
We also have
$$
\xi_{n+1}=\phi(\xi_{n})
$$
and
$$
\eta_{n+1}=\la(\xi_{n})\, \eta_{n}\;.
$$
Since $(\xi_{n})$ tends to zero when $n$ tends to infinity, we can
find $n_{1}\ge n_{0}$ such that for any $n>n_{1}$ we have
$$
|\la(\xi_{n})|<\frac{1+|\lambda_{2}|}{2}<1\;.
$$
Therefore $(\eta_{n})$ converges to zero exponentially fast.

\begin{figure}
\begin{center}
\resizebox{9cm}{!}{\includegraphics{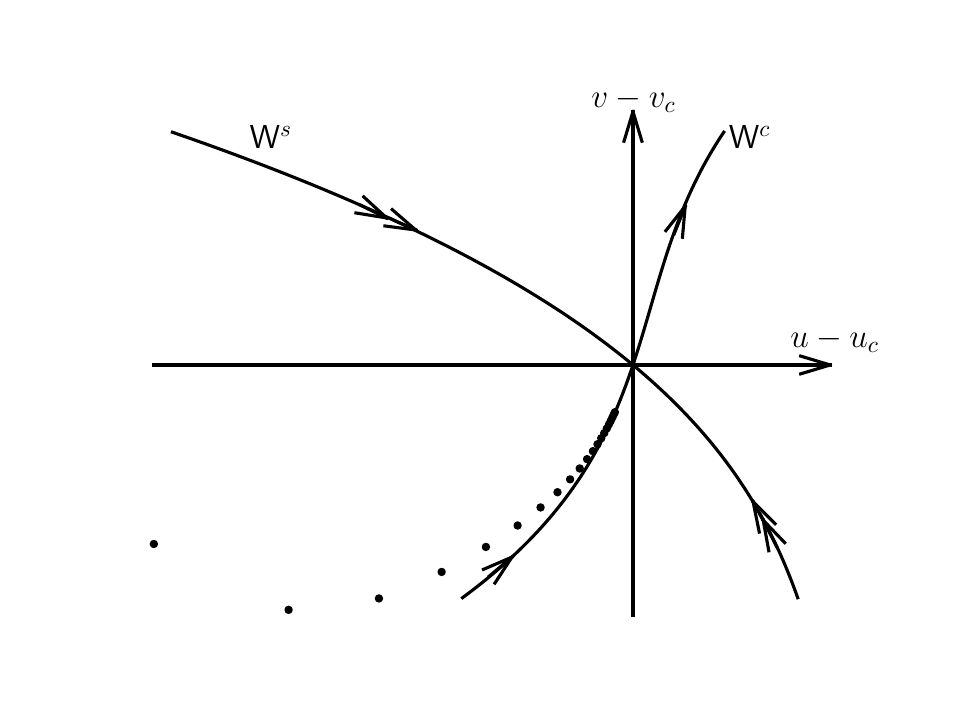}}
\caption{Center manifold ($W^{c}$), stable manifold ($W^{s}$) of
the fixed point $(u_{c},v_{c})$ and an orbit for a critical $b$.
The picture is centred at the fixed point. The parameters are
$p_0= 0.3$, $p_1= 0.69$, $p_2=0.01$.}
\label{croquis}
\end{center}
\end{figure}

The regular curve $s\to \Phi^{-1}(0,s)$ defined for $|s|$ small enough is a
``fast'' manifold. The orbit of any initial condition on this manifold
is contained in it and converges exponentially fast to
$(u_{c},v_{c})$. This curve passes through $(u_{c},v_{c})$ (for $s=0$) and
is tangent at this point to the eigendirection of $DF(u_{c},v_{c})$
corresponding to the  eigenvalue
$\lambda_{2}$. As observed previously, this eigendirection does not
intersect the negative cone $\mathcal{C}_{c}^{-}$ of the $(u,v)$ plane
with apex $(u_{c},v_{c})$. Therefore, there exists a 
ball $\mathcal{V}$ centered in $(u_{c},v_{c})$, contained in
$\mathcal{V}_{0}$  such that 
 the intersection of the ``fast'' curve with $\mathcal{V}\cap
 \mathcal{C}_{c}^{-}$  reduces to the origin.
See Fig. \ref{croquis}.

Since $(\xi_{n},\, \eta_{n})$ tends to the origin when $n$ tends to
infinity, we can find $n_{2}\ge n_{1}$ such that for any $n>n_{2}$ we
have $(\xi_{n},\, \eta_{n})\in\Phi(\mathcal{V})$. From the above
argument we have $\xi_{n}\neq0$.

We now define for $n>n_{2}$ 
a new sequence of points $(\tilde u_{n},\tilde v_{n})$ by
$$
\begin{pmatrix}
\tilde u_{n}\\
\tilde v_{n}
\end{pmatrix}=\Phi^{-1}\;\begin{pmatrix}
\xi_{n}\\
0
\end{pmatrix}\;.
$$
Since $\phi(\xi_{n})=\xi_{n+1}$ we have (with $b=b_{c}$)
$$
F(\tilde u_{n},\tilde v_{n})=(\tilde u_{n+1},\tilde v_{n+1})\;.
$$
This sequence of points is on the center manifold (and
differs from $(u_{c},v_{c})$) since $\xi_n\neq 0$, and
therefore we have from a previous result
$$
\tilde v_{n}=v_{c}-\frac{1}{n\,g_{2}}+\mathcal{O}\left(\frac{\log n}{n^{2}}
\right)\;.
$$
We also have (since $\tilde u_{n}-u_{c}=f(\tilde v_{n}-v_{c})$)
$$
\tilde u_{n}=u_{c}-\frac{a}{n\,g_{2}}+\mathcal{O}\left(\frac{\log n}{n^{2}}
\right)\;.
$$
Since $\Phi^{-1}$ is locally Lipschitz near the origin we have for $n$
large enough
$$
\big| \tilde u_{n}-u_{n}\big|+\big| \tilde v_{n}-v_{n}\big|\le 
\mathcal{O}(1)\; |\eta_{n}|
$$
which converges to zero exponentially fast as we have seen before. 
Summarizing the above results, we formulate the following theorem:
\begin{theorem}\label{approach}
Consider a sub-critical Bienaymé-Galton-Watson Markov chain given by
(\ref{BGW}) with $p_0+p_1<1$ and $\sum_0^\infty kp_k<1$.
For $n\in\Ne$ and $b=\exp(\be)>1$ and $x=1$ or 2, consider the measure on planar
rooted trees of height bounded by $n+1$ given by (\ref{H})-(\ref{22}).
Then the partition function (\ref{Xin}) is an increasing function of $n$,
and there exists $b=b_c((p_k)_k)>1$ such that $\Xi_n^x$ converges to a finite
limit $\Xi_\infty^x$ as $n\nea\infty$ if and only if $b\le b_c$. Moreover, if
$b=b_c$, then
\beq\label{vnvc}
\Xi_n^1-\Xi_\infty^1=-\frac{1}{n\,g_{2}}+\mathcal{O}\left(\frac{\log n}{n^{2}}
\right)\,,\qquad
\Xi_n^2-\Xi_\infty^2=-\frac{a}{n\,g_{2}}+\mathcal{O}\left(\frac{\log n}{n^{2}}
\right)\;,
\eeq
where $a$ and $g_2$ are given by (\ref{acg2}).
\end{theorem}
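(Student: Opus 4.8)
The plan is to assemble the theorem from the pieces already developed in Sections 3--5, so that essentially no new computation is required; the work is in organizing the monotonicity, the dichotomy, and the rate estimate into a single clean statement. First I would recall from Section~\ref{recursion} that $\Xi_n^x(1,1)$ is obtained by iterating the partially monotone map $F$ starting from $(u_0,v_0)=(1,1)$, and that the sequence $(u_n,v_n)=F^{(n)}(1,1)$ is coordinatewise non-decreasing, which gives monotonicity of $\Xi_n^x$ in $n$ at once. For the dichotomy, I would invoke the fixed-point analysis of Section~4: the right-hand side of the second equation in (\ref{fpuv}) is strictly convex in $v$, so there are $0$, $1$, or $2$ real fixed points in $\{v\ge1\}$; the saddle-node bifurcation value $b_c$ is characterized by (\ref{fpuv}) together with the tangency condition (\ref{fpuv2}), and I would show $b_c>1$ from (\ref{Rvc}) (equivalently from $R'(1)<1-p_1$, which holds under the subcriticality hypothesis (\ref{gwsub})). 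For $b\le b_c$ a fixed point $(u_c,v_c)\ge(1,1)$ exists, so by partial monotonicity $(u_n,v_n)$ is trapped below it and converges; for $b>b_c$ there is no real fixed point in $\{v\ge1\}$, the orbit escapes to infinity, and $\Xi_n^x\to\infty$. This establishes the first two assertions.

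For the rate (\ref{vnvc}) at $b=b_c$, I would reuse the center-manifold computation already carried out in Section~\ref{cvgvc}. The key structural input is that $DF(u_c,v_c)$ has eigenvalues $1$ and $\la_2\in(0,1)$, with the $\la_2$-eigendirection lying strictly outside the negative cone $\mathcal{C}_c^-$ (the opposite-signs-of-components observation), so that Takens' theorem \cite{T71} applies with $s=1$, $c=1$, $u=0$ and the Sternberg non-resonance conditions hold, giving a regular conjugacy $F=\Phi^{-1}\circ N\circ\Phi$ to the normal form $N(\xi,\eta)=(\phi(\xi),\la(\xi)\eta)$ with $\phi(0)=0$, $\phi'(0)=1$, $\la(0)=\la_2$. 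Writing $(\xi_n,\eta_n)=\Phi(u_n,v_n)$, the $\eta$-component contracts exponentially because $|\la(\xi_n)|<(1+|\la_2|)/2<1$ eventually, while the $\xi$-dynamics $\xi_{n+1}=\phi(\xi_n)$ is conjugate (via the center manifold graph $x=f(y)=ay+cy^2+\mathcal{O}(y^3)$ and the reduced map $g(y)=y+g_2y^2+\mathcal{O}(y^3)$ with $g_2>0$ from (\ref{acg2})) to the scalar recursion whose inverse $z_n=1/y_n$ satisfies $z_{n+1}=z_n-g_2+\mathcal{O}(z_n^{-1})$, hence $z_n=-ng_2+\mathcal{O}(\log n)$ and $y_n=-1/(ng_2)+\mathcal{O}(\log n/n^2)$. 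Comparing $(u_n,v_n)$ with the shadowing center-manifold sequence $(\tilde u_n,\tilde v_n)=\Phi^{-1}(\xi_n,0)$, whose distance to $(u_n,v_n)$ is $\mathcal{O}(|\eta_n|)$ by local Lipschitz continuity of $\Phi^{-1}$, and using $\tilde v_n-v_c=-1/(ng_2)+\mathcal{O}(\log n/n^2)$ together with $\tilde u_n-u_c=f(\tilde v_n-v_c)=a(\tilde v_n-v_c)+\mathcal{O}((\tilde v_n-v_c)^2)$, one gets exactly the two displayed asymptotics for $\Xi_n^1-\Xi_\infty^1$ and $\Xi_n^2-\Xi_\infty^2$ after identifying $\Xi_\infty^1=u_c$, $\Xi_\infty^2=v_c$.

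The one point that needs a small extra argument, and which I expect to be the main obstacle, is verifying that the orbit $(u_n,v_n)$ genuinely enters the region where Takens' conjugacy lives and approaches the fixed point along (a neighbourhood of) the center manifold rather than getting stuck or approaching transversally --- i.e. ruling out that the orbit lies on the fast manifold. This is handled by the cone argument: since $(u_{n+1},v_{n+1})\ge(u_n,v_n)$, the orbit sits in the positive cone with apex $(u_n,v_n)$, hence $(u_c,v_c)-(u_n,v_n)$ lies in the negative cone $\mathcal{C}_c^-$ with apex $(u_c,v_c)$; because the fast ($\la_2$-)eigendirection meets $\mathcal{V}\cap\mathcal{C}_c^-$ only at the apex, the orbit cannot be on the fast manifold, so $\xi_n\neq0$ for large $n$ and the generic $n^{-1}$ rate (not the exponential one) is the correct one. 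I would also note for completeness that $R(v)<\infty$ for all $v$ (the entire branching mechanism hypothesis) is what guarantees $F$ is well-defined and smooth on all of $[1,\infty)^2$, so the center-manifold and normal-form machinery may be applied on a genuine neighbourhood of $(u_c,v_c)$; the Lagrange-inversion formulas of Section~4 give, if desired, explicit convergent series for $u_c=\Xi_\infty^1$ and $v_c=\Xi_\infty^2$ but are not needed for the theorem itself.
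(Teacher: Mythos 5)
Your proposal follows the paper's own proof essentially step for step: monotonicity of $\Xi_n^x$ from the partial order and $F(1,1)\ge(1,1)$, the $b\le b_c$ dichotomy from the saddle-node fixed-point analysis of Section~4, and the $n^{-1}$ rate at $b=b_c$ from the Takens conjugacy, the cone argument ruling out the fast manifold (so $\xi_n\neq0$), and the shadowing center-manifold orbit $(\tilde u_n,\tilde v_n)$ combined with the scalar recursion for $z_n=1/y_n$. One small caveat: with the identification $\Xi_n^1=u_{n+1}$, $\Xi_n^2=v_{n+1}$ and the center manifold $u-u_c=a(v-v_c)+\mathcal{O}\bigl((v-v_c)^2\bigr)$, your computation actually yields $\Xi_n^1-\Xi_\infty^1=-a/(n g_2)+\dots$ and $\Xi_n^2-\Xi_\infty^2=-1/(n g_2)+\dots$, i.e.\ the two formulas of (\ref{vnvc}) with the superscripts $1$ and $2$ interchanged --- this appears to be a slip in the theorem's statement rather than in your argument, but you should flag the mismatch instead of asserting that the displayed asymptotics are recovered ``exactly''.
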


\section{critical regime: number of external nodes}\label{nbnodes}
{   
From (\ref{Xinuv}),
\beq\label{LnQn}
\left(\bmat\Ee_n^1 L_n\\\Ee_n^2 L_n \emat\right)=
\left(\bmat{1\over\Xi_n^1}{\p\Xi_n^1\over\p u}\\{1\over\Xi_n^2}{\p\Xi_n^2\over\p u}  \emat\right)_{u=v=1}
\ ,\qquad
\left(\bmat\Ee_n^1 Q_n\\\Ee_n^2 Q_n \emat\right)=
\left(\bmat{1\over\Xi_n^1}{\p\Xi_n^1\over\p v}\\{1\over\Xi_n^2}{\p\Xi_n^2\over\p v}  \emat\right)_{u=v=1}\,
\eeq
In view of Theorem \ref{approach}, 
it suffices to study, using (\ref{root}) and the chain rule, the tangent map
\begin{align}
D\,\Xi_n(u,v)|_{u=v=1}&=
DF^{(n+1)}(u,v)|_{u=v=1}\cr
&=DF(u_n,v_n)DF(u_{n-1},v_{n-1})\dots DF(u_1,v_1)DF(1,1)\,.
\end{align}
}
In the sequel we sometimes use the simplified notation
$$
DF^{(k)}(u,v)\equiv DF_{u,v}^{k}
$$
for the differential of the k-th iterate of $F$ evaluated at $(u,v)$.

\begin{theorem}\label{thasymp}
There exists a constant $C>1$ such that for $b=b_{c}$
$$
C^{-1}\le \liminf_{n\to\infty} n^{2} \;(1,0)\, DF^{n}_{1,1}
\begin{pmatrix} 1\\0\end{pmatrix}
\le 
\limsup_{n\to\infty} n^{2} \;(1,0)\, DF^{n}_{1,1}
\begin{pmatrix} 1\\0\end{pmatrix}\le C\;.
$$
The same result holds for the other component and the vector $
\left(\begin{smallmatrix} 0\\1\end{smallmatrix}\right)$
\end{theorem}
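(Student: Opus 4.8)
The plan is to track the growth of the matrix product $DF^{n}_{1,1}=DF(u_{n-1},v_{n-1})\cdots DF(1,1)$ along the orbit $(u_n,v_n)\nea(u_c,v_c)$, using the asymptotics established in Section 5. The key structural fact is that $DF(u,v)=\left(\begin{smallmatrix}p_1&R'(v)\\ p_1& bR'(v)\end{smallmatrix}\right)$ has rank-one-plus-correction form: at the fixed point its eigenvalues are $1$ and $\lambda_2\in(0,p_1)$, with the $\lambda_2$-direction transverse to the positive cone. First I would change basis to the eigenbasis of $DF(u_c,v_c)$, writing $DF(u_n,v_n)=P\,(I+E_n)\,\mathrm{diag}(1,\lambda_2)\,P^{-1}$ where $\|E_n\|=\mathcal O(v_c-v_n)=\mathcal O(1/n)$ by \eqref{vnvc} (the entries of $DF$ depend on the orbit only through $R'(v_n)$, which is Lipschitz in $v_n$). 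The contracting direction contributes a factor that stays bounded (indeed exponentially small after enough steps), so the product is governed by the center direction; the main point is to show the accumulated center eigenvalue, which is $\prod_{k}(1+\mathcal O(1/k))$ in the crude estimate, is actually bounded, and that the \emph{off-diagonal leakage} from the center into the stable direction is what produces the $n^{-2}$ law.

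The cleanest route is to reduce to the one-dimensional analysis already carried out. Recall from Section 5 that the center manifold is a graph $x=f(y)$ and that on it the dynamics is $y_{n+1}=g(y_n)$ with $g_1=1$, $g_2>0$, giving $y_n\sim -1/(n g_2)$. The derivative of the return map $g$ along the center manifold is $g'(y)=1+2g_2 y+\mathcal O(y^2)=1-2/n+\mathcal O(\log n/n^2)$ when evaluated at $y_n$. Hence $\prod_{k=m}^{n} g'(y_k)=\exp\!\big(\sum_{k=m}^n(-2/k+\mathcal O(\log k/k^2))\big)\asymp (m/n)^2$. I would make this rigorous by an Euler–Maclaurin / telescoping estimate on $\log g'(y_k)$, which is exactly the kind of sum handled in Section 5. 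This furnishes the $n^{-2}$ rate; the two-sided bound $C^{-1}\le n^2(\cdots)\le C$ follows because all the error terms are summable and the implied constants are uniform.

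To pass from the abstract center-manifold statement to the concrete quantity $(1,0)\,DF^{n}_{1,1}\left(\begin{smallmatrix}1\\0\end{smallmatrix}\right)$, I would use the Takens normal-form conjugacy $F=\Phi^{-1}\circ N\circ\Phi$ from Section 5, under which $DF^n_{1,1}$ is conjugate, up to bounded invertible factors $D\Phi$ and $D\Phi^{-1}$ evaluated near the origin, to $DN^n$ along the orbit. In the normal form $N(\xi,\eta)=(\phi(\xi),\lambda(\xi)\eta)$ the differential is upper/lower triangular with diagonal $(\phi'(\xi_k),\lambda(\xi_k))$; the $\eta$-block is exponentially contracting (shown in Section 5), while the $\xi$-block is precisely $\prod\phi'(\xi_k)$, and $\xi_k$ on the orbit is comparable to $y_k\sim -1/(k g_2)$ because the orbit approaches the fixed point along the center manifold with the stable component dying exponentially. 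Since $D\Phi$, $D\Phi^{-1}$ are bounded with bounded inverse on $\mathcal V_0$, and since the test covector $(1,0)$ and test vector $\left(\begin{smallmatrix}1\\0\end{smallmatrix}\right)$ have strictly positive pairing with, respectively, the left and right center eigendirections (this uses that the stable eigendirection misses the cone $\mathcal C_c^-$, so $(1,0)$ is not orthogonal to the center-left-eigenvector and $\left(\begin{smallmatrix}1\\0\end{smallmatrix}\right)$ is not in the stable subspace), the scalar $(1,0)\,DF^n_{1,1}\left(\begin{smallmatrix}1\\0\end{smallmatrix}\right)$ is pinched between two constant multiples of $\prod_{k=1}^n\phi'(\xi_k)\asymp n^{-2}$. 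The same argument, with the roles of left and right eigenvectors unchanged (both test vectors still have nonzero center components), handles the other component and the vector $\left(\begin{smallmatrix}0\\1\end{smallmatrix}\right)$.

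The main obstacle I anticipate is not the one-dimensional $n^{-2}$ estimate — that is essentially done in Section 5 — but controlling the \emph{products of nearly-identity matrices along a slowly converging orbit}: one must rule out that the $\mathcal O(1/k)$ perturbations $E_k$ conspire to change the exponent, and one must verify that the projection of $DF^n_{1,1}\left(\begin{smallmatrix}1\\0\end{smallmatrix}\right)$ onto the center direction does not accidentally vanish to higher order. Both are resolved by working in the Takens normal form, where the dynamics genuinely decouples and the center part is a scalar product $\prod\phi'(\xi_k)$ with $\phi'(\xi_k)=1+2g_2\xi_k+\mathcal O(\xi_k^2)$, $\xi_k<0$; the positivity of $g_2$ and the uniform Lipschitz bounds on $\Phi^{\pm1}$ then make the two-sided constant $C$ explicit in principle.
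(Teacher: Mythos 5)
Your proposal is essentially correct, but it reaches the $n^{-2}$ law by a different technical route than the paper. You push the Takens conjugacy $F=\Phi^{-1}\circ N\circ\Phi$ all the way through the derivative computation: since $DN(\xi,\eta)=\left(\begin{smallmatrix}\phi'(\xi)&0\\ \lambda'(\xi)\eta&\lambda(\xi)\end{smallmatrix}\right)$ is triangular and $\eta_n\to0$ exponentially, the product genuinely decouples and everything reduces to the scalar product $\prod_k\phi'(\xi_k)\asymp n^{-2}$, plus non-degeneracy of the pairings with $D\Phi^{\pm1}$. The paper instead never re-invokes Takens at this stage: it conjugates only by the \emph{constant} matrix $S$ diagonalizing $DF(u_c,v_c)$, writes $A(v)=S\,DF(u,v)\,S^{-1}$ as identity-plus-$\mathcal O(v-v_c)$ in the corner entries, tracks the \emph{direction} of the image vector through the M\"obius recursion for $\theta_j$ (Lemma \ref{itertheta}, showing $\theta_j=\mathcal O(1/j)$ so the direction stays pinned to the center axis), and extracts the modulus from $\prod_j\rho(v_j,\theta_j)\asymp n^{-\al'(v_c)/g_2}$ with the explicit identity $\al'(v_c)=2g_2$ (Lemma \ref{prodrho}). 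Both arguments need the same two ingredients you identify: the strict positivity of the entries of $DF$ together with the cone comparison (the paper's Lemma \ref{iterordre}) to handle the initial segment $DF^{n_0}_{1,1}$ and to rule out accidental vanishing of the center component, and the $1/(ng_2)$ asymptotics of $v_n-v_c$ from Theorem \ref{approach}. Your route buys a cleaner conceptual picture (true block-decoupling, no need to control the $\theta_j$ recursion) at the cost of relying on the $C^2$ regularity of $\Phi$ and of a coordinate change that rescales the quadratic coefficient; note in this respect that $\phi'(\xi)=1+2\tilde g_2\xi+\mathcal O(\xi^2)$ with $\tilde g_2=g_2/h'(0)$ for the change of variable $\xi=h(y)$, although $\prod_k\phi'(\xi_k)\asymp n^{-2}$ is coordinate-free since $\xi_k\sim-1/(k\tilde g_2)$. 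The paper's route is more elementary (linear algebra plus a scalar recursion) and yields the constant explicitly. One sentence of your first paragraph is misleading and should be dropped: the $n^{-2}$ decay is \emph{not} produced by off-diagonal leakage from the center into the stable direction (that leakage is exponentially small); it is produced by the accumulated center multiplier $\prod_k(1-2/k+\cdots)$, exactly as your second and third paragraphs correctly state.
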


The proof of this theorem requires the following two lemmas.
Let $\cone$ denote the positive cone of $\R^{2}$. A partial order of
$\R^{2}$ is
associated to this cone by $\vec v\ge \vec w$ if $\vec v-\vec
w\in \cone$ (see for example \cite{Birkhoff}, chapter XVI, section 1).



\begin{lemma}\label{iterordre}
Let $u_{0}>0$ and $v_{0}>0$. Let $\vec v$ and $\vec w$ be two vectors
in the interior of $\cone$.  Let $\Lambda>1$ be such that
$$
\Lambda^{-1}\,\vec v\le \vec w \le \Lambda\,\vec v\;.
$$ 
(such a finite $\Lambda$ always exists).
Then for $b=b_{c}$ and for any integer $k$ 
$$
\Lambda^{-1}\,DF^{k}_{u_{0},v_{0}}\vec v
\le DF^{k}_{u_{0},v_{0}}\vec w \le \Lambda\,
DF^{k}_{u_{0},v_{0}}\vec v\;.
$$
In particular 
$$
\Lambda^{-1}\le \frac{ (1,0)\, DF^{k}_{u_{0},v_{0}}\vec w}
{(1,0)\, DF^{k}_{u_{0},v_{0}}\vec v}\le \Lambda\;,
$$
and similarly for the other component. 
\end{lemma}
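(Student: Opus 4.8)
The plan is to exploit the single structural fact that makes everything work: the tangent map $DF$ has nonnegative entries at \emph{every} point of the region where the orbit lives, so that the differential of each iterate is a nonnegative matrix and therefore preserves the cone $\cone$. Once this is observed, the sandwiching inequalities follow from linearity alone, with no estimate needed.

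First I would record the shape of the tangent map. From the definition of $F$ in (\ref{Xi01}), for any $(u,v)$ with $u,v\ge0$,
$$
DF_{u,v}=\begin{pmatrix}p_1 & R'(v)\\ p_1 & bR'(v)\end{pmatrix},
$$
whose entries do not depend on $u$ and are all nonnegative, since $p_1\ge0$, $b\ge1$ and $R'(v)=\sum_{k\ge2}kp_kv^{k-1}\ge0$ for $v\ge0$. Moreover $F$ maps $[0,\infty)^2$ into $(0,\infty)^2$ because $p_0>0$, so the forward orbit $(u_j,v_j):=F^{(j)}(u_0,v_0)$ stays in $(0,\infty)^2$ for all $j\ge0$. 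By the chain rule, $DF^{k}_{u_0,v_0}=DF_{u_{k-1},v_{k-1}}\cdots DF_{u_1,v_1}DF_{u_0,v_0}$ is a product of nonnegative $2\times2$ matrices, hence a nonnegative matrix, so it maps $\cone$ into $\cone$. I would also note that each factor sends the interior of $\cone$ into itself: the image of a vector $(a,c)$ with $a,c>0$ has both components bounded below by $p_1 a>0$ if $p_1>0$, and by $R'(v_j)c>0$ if $p_1=0$ (here $R'(v_j)>0$ since $v_j>0$ and $R(1)=1-p_0-p_1>0$ forces $p_k>0$ for some $k\ge2$); hence $DF^{k}_{u_0,v_0}$ maps the interior of $\cone$ into itself as well.

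The inequalities are then immediate. The hypothesis $\La^{-1}\vec v\le\vec w\le\La\vec v$ means $\vec w-\La^{-1}\vec v\in\cone$ and $\La\vec v-\vec w\in\cone$; applying the nonnegative matrix $DF^{k}_{u_0,v_0}$ and using linearity gives $DF^{k}_{u_0,v_0}\vec w-\La^{-1}DF^{k}_{u_0,v_0}\vec v\in\cone$ and $\La DF^{k}_{u_0,v_0}\vec v-DF^{k}_{u_0,v_0}\vec w\in\cone$, which is exactly the stated conclusion (the case $k=0$ being trivial). For the parenthetical existence of a finite $\La$: writing $\vec v=(v_1,v_2)$ and $\vec w=(w_1,w_2)$ with all entries positive, the value $\La=\max\{w_1/v_1,\,v_1/w_1,\,w_2/v_2,\,v_2/w_2\}$ works, and one enlarges it if a strict inequality $\La>1$ is wanted. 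Finally, for the ``in particular'' statement: since $DF^{k}_{u_0,v_0}\vec v$ lies in the interior of $\cone$, its first component $(1,0)\,DF^{k}_{u_0,v_0}\vec v$ is strictly positive; applying the linear functional $(1,0)$, which is nonnegative on $\cone$, to the two cone memberships above and dividing by this positive quantity yields $\La^{-1}\le (1,0)\,DF^{k}_{u_0,v_0}\vec w\,/\,(1,0)\,DF^{k}_{u_0,v_0}\vec v\le\La$, and the same with $(0,1)$ in place of $(1,0)$.

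There is essentially no hard step here. The only points deserving a word of care are that the entries of $DF$ are nonnegative \emph{uniformly in the base point} — this is what lets the argument run for the differential of an arbitrary iterate, not merely for a single linear map — and that the iterated differential keeps the interior of the cone from collapsing, so that the denominators in the ratio bounds are genuinely nonzero.
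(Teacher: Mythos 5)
Your proof is correct and follows essentially the same route as the paper's (very terse) argument: the entries of $DF_{u,v}$ are nonnegative along the orbit, so the iterated differential is a nonnegative matrix preserving the cone order, and the component inequalities follow by projecting. You merely supply the details the paper omits, including the useful check that the interior of $\cone$ is preserved (so the denominators are nonzero) even when $p_1=0$.
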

\begin{proof}
The first result follows from the fact that if $u>0$, $v>0$ and
$b=b_{c}$, the entries of the matrix $DF_{u,v}$ are positive. The
second result follows from the definition of the order.
\end{proof}

For $b=b_{c}$ we can diagonalize the matrix $DF(u_c,v_c)$:
\beq
S\,DF(u_c,v_c)\,S^{-1}=\left(\bmat 1&0\cr0&\la_2\emat\right)
\eeq
\[
S^{-1}=\left(
\begin{array}{ll}
\frac{R_{c}^{^{\prime }} }{1-p_{1}} & \frac{-R_{c}^{^{\prime
}} }{p_{1}\left[\left( b-1\right) R_{c}^{^{\prime
}}-1\right] } \\ 
1 & -1
\end{array}
\right)\,,\qquad S=\frac{1}{\left| S^{-1}\right| }\left( 
\begin{array}{ll}
-1 & \frac{R_{c}^{^{\prime }} }{p_{1}\left[ \left(
b-1\right) R_{c}^{^{\prime }} -1\right] } \\ 
-1 & \frac{R_{c}^{^{\prime }} }{1-p_{1}}
\end{array}
\right)
\]
\[
\left| S^{-1}\right| =\frac{R_{c}^{^{\prime }}}{1-p_{1}}\frac{1-p_{1}\left(
b-1\right) R_{c}^{^{\prime }}}{p_{1}\left[ \left( b-1\right)
R_{c}^{^{\prime }}-1\right] }\]

We then use the matrix $S$ for an approximate diagonalization of $DF(u,v)$
near the fixed point $(u_c,v_c)$:
\beq
A(v)=S\,DF(u,v)\,S^{-1}\,,\qquad A(v_c)=\left(\bmat 1&0\cr0&\la_2\emat\right)
\eeq
The variable $u$ has been omitted from $A(\cdot)$ because the matrix $DF(u,v)$
actually does not depend upon $u$. We obviously have for any
$n_{0}>0$, $b=b_{c}$,
and $(u_{n_0},v_{n_0})=F^{n_{0}}(1,1)$
\beq\label{prodconj}
DF(u_n,v_n)DF(u_{n-1},v_{n-1})\dots DF(u_{n_0},v_{n_0})
=S^{-1}\;A(v_n)A(v_{n-1})\dots A(v_{n_0})\;S\;.
\eeq
Hence the asymptotic behaviour of $DF^{n}$ will follow from the
asymptotic behaviour of the product of the $A$ matrices. Note also that
$A(v_{n})$ converges to $A(v_{c})$.  

In a neighbourhood of $v_c$ we can write
\beq
A(v)=\left(\bmat 1+\al(v)&\beta(v)\cr
                 \ga(v)&\la_2+\del(v)\emat\right) 
\eeq
with
\beq
\al(v)=\al'(v_c)(v-v_c)+\OO((v-v_c)^2)
\eeq
and similarly for $\be,\,\ga$ and $\del$. These can be computed at leading order
from
\[
DF(u,v)=\left( 
\begin{array}{ll}
p_{1} & R_{c}^{\prime }+\left( v-v_{c}\right) R_{c}^{^{\prime \prime }} \\ 
p_{1} & b\left[ R_{c}^{\prime }+\left( v-v_{c}\right) R_{c}^{^{\prime
\prime }}\right] 
\end{array}
\right) +\OO((v-v_c)^2).
\]

For any $\theta\in\R$ such that 
$$
\theta\neq -\frac{1+\alpha(v)}{\beta(v)}
$$
we have
\beq
A(v)\left(\bmat 1\cr
                \theta\emat\right)=\rho(v,\theta)\left(\bmat 1\cr
                                   \tilde\theta(v,\theta)\emat\right)
\eeq
with
$$
\rho(v,\theta)=1+\alpha(v)\,+\beta(v)\,\theta
$$
$$
\tilde\theta(v,\theta)=
\frac{\gamma(v)+(\lambda_{2}+\delta(v))\theta}{
1+\alpha(v)\,+\beta(v)\,\theta}\;.
$$

\begin{lemma}\label{itertheta}
There exists $n_{0}>1$ large enough such that 
the sequence $(\theta_{j})_{j\ge n_{0}}$ defined recursively by
$\theta_{n_{0}}=0$ and 
$$
\theta_{j+1}=
\frac{\gamma(v_{j})+(\lambda_{2}+\delta(v_{j}))\theta_{j}}{
(1+\alpha(v_{j}))\,+\beta(v_{j})\,\theta_{j}}
$$
satisfies 
$$
\theta_{j}=-\frac{\alpha'(v_{c})}{j\,(1-\lambda_{2})}+o\big(j^{-1}\big)\;.
$$
\end{lemma}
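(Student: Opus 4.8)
The plan is to linearise the recursion about $\theta=0$, reducing it to an affine recursion of the form $\theta_{j+1}=\lambda_2\theta_j+\gamma(v_j)+o(j^{-1})$, and to read off the $1/j$ asymptotics from there; the only input needed from outside the recursion is the rate at which $v_j\to v_c$, supplied by Section~\ref{cvgvc}. From that section, $\tilde v_n=v_c-\frac1{n\,g_2}+\OO(\log n/n^2)$ while $|v_n-\tilde v_n|=\OO(|\eta_n|)$ decays exponentially fast, so
$$
v_j-v_c=-\frac1{g_2\,j}+\OO\!\Bigl(\frac{\log j}{j^2}\Bigr)\,.
$$
Since $A(v)$ is smooth with $A(v_c)=\mathrm{diag}(1,\lambda_2)$, this makes $\alpha(v_j),\beta(v_j),\gamma(v_j),\delta(v_j)$ all $\OO(1/j)$; in particular $\gamma(v_j)=-\gamma'(v_c)/(g_2\,j)+\OO(\log j/j^2)$, and similarly for the others with their slopes at $v_c$.

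First I would check that the sequence is well defined and tends to $0$. Pick $n_0$ so large that, for all $j\ge n_0$, the numbers $\alpha(v_j),\dots,\delta(v_j)$ are small enough that on a fixed interval $I=[-\kappa,\kappa]$ about $0$ the denominator $1+\alpha(v_j)+\beta(v_j)\theta$ stays close to $1$, the map $\theta\mapsto\tilde\theta(v_j,\theta)$ sends $I$ into itself, and it is there a contraction with factor at most $(1+\lambda_2)/2<1$. Since $\theta_{n_0}=0\in I$, an induction keeps $\theta_j\in I$; the uniform contraction together with $\gamma(v_j)\to0$ then forces $\theta_j\to0$.

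Next, the quantitative part. Expanding the quotient, for $\theta_j$ small one has
$$
\theta_{j+1}=\lambda_2\theta_j+\gamma(v_j)+\OO\!\bigl(|\theta_j|(|\alpha(v_j)|+|\beta(v_j)|+|\delta(v_j)|)+|\alpha(v_j)||\gamma(v_j)|\bigr)\,,
$$
so, using $|\gamma(v_j)|\le C/j$, one gets $|\theta_{j+1}|\le\frac{1+\lambda_2}{2}|\theta_j|+C'/j$ for $j\ge n_0$, whence (comparing with the equality recursion) $\theta_j=\OO(1/j)$. Re-inserting $\theta_j=\OO(1/j)$ makes every error term above $\OO(1/j^2)$, so
$$
\theta_{j+1}=\lambda_2\theta_j+\gamma(v_j)+\OO(1/j^2)=\lambda_2\theta_j-\frac{\gamma'(v_c)}{g_2}\,\frac1j+\OO\!\Bigl(\frac{\log j}{j^2}\Bigr)=:\lambda_2\theta_j+r_j\,.
$$
Writing the solution as $\theta_j=\lambda_2^{\,j-n_0}\theta_{n_0}+\sum_{k=n_0}^{j-1}\lambda_2^{\,j-1-k}r_k$, the geometric weights concentrate the sum near $k=j$ and give $\theta_j=r_j/(1-\lambda_2)+o(1/j)$, i.e. the claimed $1/j$ rate. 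The exact constant then follows from computing $\gamma'(v_c)$ — from $DF(u,v)=DF(u_c,v_c)+(v-v_c)R''_c\left(\begin{smallmatrix}0&1\\0&b\end{smallmatrix}\right)+\OO((v-v_c)^2)$ conjugated by $S$ — and combining with the expression (\ref{acg2}) for $g_2$ and the identities (\ref{Rvc})--(\ref{fpuv2}), which brings it into the form quoted in the statement.

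The main obstacle is the two-stage bootstrap: one must choose $n_0$ carefully and keep uniform control of the quadratic remainders ($\beta(v_j)\theta_j^2$, $\delta(v_j)\theta_j$, $\alpha(v_j)\gamma(v_j)$, $\dots$), first to obtain $\theta_j\to0$, then $\theta_j=\OO(1/j)$, and only then the sharp $1/j$ rate. Once $\theta_j=\OO(1/j)$ is established, the affine-recursion estimate and the algebraic identification of the leading constant are routine.
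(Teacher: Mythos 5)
Your strategy is the same as the paper's: import $v_j-v_c=-\frac{1}{g_2 j}+\OO(\log j/j^2)$ from Section \ref{cvgvc}, reduce the M\"obius recursion to the affine one $\theta_{j+1}=\lambda_2\theta_j+\gamma(v_j)+\OO(j^{-2})$ after a bootstrap, and solve it. Your handling of the bootstrap (contraction to get well-posedness and $\theta_j\to0$, then $\theta_j=\OO(1/j)$, then the sharp rate from the geometric sum) is more careful than the paper's one-line argument and is correct as far as it goes.

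The genuine gap is the step you defer as ``routine'': the identification of the limiting constant. Your analysis yields $\theta_j=-\frac{\gamma'(v_c)}{g_2(1-\lambda_2)}\,\frac1j+o(1/j)$, and for this to equal the stated $-\frac{\alpha'(v_c)}{j(1-\lambda_2)}$ one would need $\gamma'(v_c)=g_2\,\alpha'(v_c)$. Computing $\gamma'(v_c)=R''_c\,(SMS^{-1})_{21}$ with $M=\left(\begin{smallmatrix}0&1\\0&b\end{smallmatrix}\right)$ gives $\gamma'(v_c)=-R''_c\,p_1^2(b-1)/\bigl(p_1+(1-p_1)B\bigr)$, where $B=p_1+b(1-p_1)$, whereas $\alpha'(v_c)=R''_c\,B^2/\bigl(p_1+(1-p_1)B\bigr)=2g_2$; these are not related by the required factor, and they even have opposite signs. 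A direct sign check confirms the mismatch: since $\gamma'(v_c)<0$ and $v_j<v_c$, one has $\gamma(v_j)>0$ for $j>n_0$, so the recursion forces $\theta_j>0$ for all $j>n_0$ and the coefficient of $1/j$ must be nonnegative, while $-\alpha'(v_c)/(1-\lambda_2)<0$. So no correct completion of your argument can land on the constant quoted in the statement; the paper's own proof makes the same silent substitution, replacing $\gamma(v_j)$ by $\alpha'(v_c)/(g_2 j)$ without justification, and that is where the discrepancy originates. Note that only the bound $\theta_j=\OO(1/j)$ is used later (Lemma \ref{prodrho} and Theorem \ref{thasymp}), so nothing downstream depends on the precise constant; but to prove the lemma as written you would have to exhibit the identity you are implicitly invoking, and it does not hold.
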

\begin{proof}
We take $n_{0}$ large enough such that for any $n>n_{0}$ the numbers 
$\alpha(v_{n})$, $\beta(v_{n})$, $\gamma(v_{n})$ and $\delta(v_{n})$ 
are small enough.
We have already the asymptotics of $v_j-v_c$ as (\ref{vnvc}). The asymptotics of
$\theta_j$ follows from
$$
\theta_{j+1}=
\frac{\gamma(v_{j})+(\lambda_{2}+\delta(v_{j}))\theta_{j}}{
(1+\alpha(v_{j}))\,+\beta(v_{j})\,\theta_{j}}\;
=\frac{\alpha'(v_{c})}{g_2\,j}+\lambda_{2} \,\theta_{j}
+\mathcal{O}\left(\frac{\log j}{j^{2}}\right).
$$
yielding
$$
\theta_{j}=-\frac{\alpha'(v_{c})}{j\,(1-\lambda_{2})}+o\big(j^{-1}\big)\;.
$$
\end{proof}

\begin{lemma}\label{prodrho}
Let  $(\theta_{j})$ be the sequence
defined in Lemma \ref{itertheta}. Then for some $C>1$
$$
C^{-1}\,n^{-\al'(v_c)/g_2}<\prod_{j=n_{0}}^{n}\rho(v_{j},\theta_{j})<C\,n^{-\al'(v_c)/g_2}\;. 
$$
\end{lemma}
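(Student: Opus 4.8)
The plan is to pass to logarithms and turn the product into a harmonic sum whose only non‑summable contribution is of size $\al'(v_c)/(g_2\,j)$. Two preliminary observations make this work. First, since $A(v_c)=\left(\begin{smallmatrix}1&0\\0&\la_2\end{smallmatrix}\right)$, both $\al(v_c)=0$ \emph{and} $\be(v_c)=0$; hence $\al(v_j)=\al'(v_c)(v_j-v_c)+\OO\big((v_j-v_c)^{2}\big)$ while $\be(v_j)=\OO(v_j-v_c)$. Second, Lemma \ref{itertheta} gives $\theta_j=\OO(j^{-1})$, and the asymptotics of $v_j-v_c$ recorded in (\ref{vnvc}) (derived in Section \ref{cvgvc}) is of the form $v_j-v_c=-\tfrac{1}{g_2\,j}+\OO\big((\log j)/j^{2}\big)$. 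Combining these, $\be(v_j)\theta_j=\OO(j^{-2})$ and $\al(v_j)=-\tfrac{\al'(v_c)}{g_2\,j}+\OO\big((\log j)/j^{2}\big)$, so that
$$
\rho(v_j,\theta_j)=1+\al(v_j)+\be(v_j)\theta_j=1-\frac{\al'(v_c)}{g_2\,j}+\OO\!\left(\frac{\log j}{j^{2}}\right)\;.
$$

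Next, since $\rho(v_j,\theta_j)\to 1$, after enlarging $n_0$ if necessary I may assume $\rho(v_j,\theta_j)\ge 1/2$ for all $j\ge n_0$, so that $\log\rho(v_j,\theta_j)$ is well defined; and since the correction is $\OO(j^{-1})$, the expansion $\log(1+x)=x+\OO(x^{2})$ gives
$$
\log\rho(v_j,\theta_j)=-\frac{\al'(v_c)}{g_2\,j}+\OO\!\left(\frac{\log j}{j^{2}}\right)\;.
$$
Summing over $n_0\le j\le n$, using $\sum_{j=n_0}^{n}j^{-1}=\log n+\OO(1)$ and the absolute convergence of $\sum_j (\log j)/j^{2}$, I would obtain $\log\prod_{j=n_0}^{n}\rho(v_j,\theta_j)=-\tfrac{\al'(v_c)}{g_2}\log n+\OO(1)$, and exponentiating yields $\prod_{j=n_0}^{n}\rho(v_j,\theta_j)=e^{\OO(1)}\,n^{-\al'(v_c)/g_2}$, which is the asserted two‑sided bound once $C$ is taken larger than $e^{M}$, where $M$ bounds the $\OO(1)$ term.

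There is no serious difficulty. The only input that is not pure bookkeeping is that the $\theta$‑dependent part of $\rho$ is negligible at the relevant order: this rests on the decay $\theta_j=\OO(j^{-1})$ of Lemma \ref{itertheta} \emph{together with} the vanishing $\be(v_c)=0$ forced by the (approximate) diagonalisation by $S$, the two combining to give $\be(v_j)\theta_j=\OO(j^{-2})$. Had $\be(v_c)$ been nonzero, this term would have been of order $j^{-1}$ and would have shifted the exponent in the conclusion. Everything else is routine: checking summability of the $\OO\big((\log j)/j^{2}\big)$ errors and enlarging $n_0$ so that each factor stays bounded away from $0$.
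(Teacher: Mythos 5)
Your proof is correct and follows essentially the same route as the paper's (which merely asserts that $\prod_j(1+\al(v_j))$ is the dominant factor and invokes the asymptotics of $v_j-v_c$ from Theorem \ref{approach}): expand $\rho=1+\al(v_j)+\be(v_j)\theta_j$, take logarithms, and sum. Your write-up usefully makes explicit the point the paper leaves implicit, namely that $\be(v_c)=0$ combined with $\theta_j=\OO(j^{-1})$ forces $\be(v_j)\theta_j=\OO(j^{-2})$, which is what keeps the $\theta$-dependent part from altering the exponent.
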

\begin{proof}
Since $(\theta_{n})$ tends to zero by Lemma \ref{itertheta} the
dominant factor is 
$$
\prod_{j=n_{0}}^{n}(1+\alpha(v_{j}))\;.
$$
The result follows from the asymptotic behavior of
$\Xi^{2}_{n}$ in Theorem  \ref{approach}.
\end{proof}

The $\left( 1,1\right) -$ entry of $S^{-1}A(v)S$ is found to be 
\[
1+\frac{1}{\left| S^{-1}\right| }R_{c}^{^{\prime \prime }}\left( v-v_{c}\right) 
\frac{1}{p_{1}\left[ \left( b-1\right) R_{c}^{^{\prime }}-1\right] }
\]
so that
\begin{eqnarray*}
\alpha ^{\prime }\left( v_{c}\right)  &=&\frac{1}{\left| S^{-1}\right| }%
R_{c}^{^{\prime \prime }}\frac{1}{p_{1}\left[ \left( b-1\right)
R_{c}^{^{\prime }}-1\right] } \\
&=&R_{c}^{^{\prime \prime }}\frac{\left[ p_{1}+b\left( 1-p_{1}\right)
\right] ^{2}}{p_{1}+\left( 1-p_{1}\right) \left[ p_{1}+b\left(
1-p_{1}\right) \right] } \\
&=&2g_{2}\;.
\end{eqnarray*}

\begin{proof}[Proof of Theorem \ref{thasymp}]
Consider for example 
$$
DF^{n}_{1,1}
\begin{pmatrix} 1\\0\end{pmatrix}\;,
$$
the other case is similar. For $n>n_{0}$ we have by the chain rule
$$
DF^{n}_{1,1}
\begin{pmatrix} 1\\0\end{pmatrix}=
DF^{n-n_{0}}_{u_{n_{0}},v_{n_{0}}}\;
DF^{n_{0}}_{1,1}
\begin{pmatrix} 1\\0\end{pmatrix}\;.
$$
Since the entries of the matrix $DF^{n_{0}}_{1,1}$ are all strictly
positive, the vector
$$
DF^{n_{0}}_{1,1}
\begin{pmatrix} 1\\0\end{pmatrix}
$$
belongs to the interior of $\cone$.
Let
$$
\vec v=S^{-1}\begin{pmatrix} 1\\0\end{pmatrix}=
\begin{pmatrix} 
\frac{R_{c}^{^{\prime }} }{1-p_{1}}
\\1\end{pmatrix}
$$
which is also in the interior of $\cone$. We apply Lemma \ref
{iterordre} with $k=n-n_{0}$ and 
$$
\vec w=DF^{n_{0}}_{1,1}
\begin{pmatrix} 1\\0\end{pmatrix}\;.
$$ 
We obtain that there exists $\Lambda>1$ (which depends on $n_{0}$)
such that for any $n>n_{0}$
$$
\Lambda^{-1}\le \frac{ (1,0)\, DF^{n}_{1,1}
\begin{pmatrix} 1\\0\end{pmatrix}}
{(1,0)\, DF^{n-n_{0}}_{u_{n_{0}},v_{n_{0}}}\vec v}\le \Lambda\;,
$$
and similarly for the other component.
 
The result follows using \eqref{prodconj} and Lemma \ref{prodrho}.
\end{proof}
The four components are related as follows:
{    
\beq
\Ee_n^1 L_n={1\over u_{n+1}} (1,0)\, S^{-1}A(v_n)A(v_{n-1})\dots A(1)S
\left(\bmat1\\0\emat\right)
\eeq
\beq
\Ee_n^2 L_n={1\over v_{n+1}} (0,1)\, S^{-1}A(v_n)A(v_{n-1})\dots A(1)S
\left(\bmat1\\0\emat\right)
\eeq
\beq
\Ee_n^1 Q_n={1\over u_{n+1}} (1,0)\, S^{-1}A(v_n)A(v_{n-1})\dots A(1)S
\left(\bmat0\\1\emat\right)
\eeq
\beq
\Ee_n^2 Q_n={1\over v_{n+1}} (0,1)\, S^{-1}A(v_n)A(v_{n-1})\dots A(1)S
\left(\bmat0\\1\emat\right)
\eeq
\beqa\label{EN1N2}
\Ee_n^1 N_n={1\over u_{n+1}} (1,0)\, S^{-1}A(v_n)A(v_{n-1})\dots A(1)S
\left(\bmat1\\1\emat\right)\le\qquad\cr
\qquad\le\Ee_n^2 N_n={1\over v_{n+1}} (0,1)\, S^{-1}A(v_n)A(v_{n-1})\dots A(1)S
\left(\bmat1\\1\emat\right)\,.
\eeqa
Of course all are positive, and the intermediate inequality is a consequence of the FKG
inequality (see Theorem \ref{FKG} below).
\begin{corollary}\label{nm2}
Assume the hypotheses of Theorem \ref{approach}, and let $b=b_c$.
Let $N_n$ be the number of external nodes. Then for some $C>1$, for all $n\ge1$,
\beq
C^{-1}<n^2\Ee_n^xN_n<C\,.
\eeq
\end{corollary}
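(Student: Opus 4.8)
The plan is to deduce Corollary \ref{nm2} directly from Theorem \ref{thasymp} together with the representation (\ref{EN1N2}) of $\Ee_n^x N_n$ as a matrix product. The key point is that Theorem \ref{thasymp} already gives two-sided bounds $C^{-1}\le n^2\,(1,0)\,DF^n_{1,1}\binom10\le C$, and the same for the other component and for the vector $\binom01$; by linearity the vector $DF^n_{1,1}\binom11=DF^n_{1,1}\binom10+DF^n_{1,1}\binom01$ then also satisfies $C^{-1}\le n^2\,(1,0)\,DF^n_{1,1}\binom11\le C$ and likewise for $(0,1)$. So $n^2$ times each entry of $DF^n_{1,1}\binom11$ is bounded above and below by positive constants. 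It remains only to connect $\Ee_n^xN_n$ to such an entry.

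First I would observe that by (\ref{LnQn}) and the chain rule (as displayed just before Lemma \ref{iterordre}),
\beq
\left(\bmat u_{n+1}\,\Ee_n^1N_n\\v_{n+1}\,\Ee_n^2N_n\emat\right)
=D\Xi_n(u,v)\Big|_{u=v=1}\left(\bmat1\\1\emat\right)
=DF^{(n+1)}_{1,1}\left(\bmat1\\1\emat\right),
\eeq
since $N_n=L_n+Q_n$ and $\Xi_n^x=\Xi_n^x(u,v)$ with $\Xi_n^x(1,1)=\Xi_n^x$, so adding the two displays in (\ref{LnQn}) gives $\Ee_n^xN_n = \frac{1}{\Xi_n^x}\big(\partial_u+\partial_v\big)\Xi_n^x\big|_{u=v=1}$, and $\Xi_n^1=u_{n+1}$, $\Xi_n^2=v_{n+1}$ by (\ref{root}). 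Hence $\Ee_n^xN_n$ equals the relevant component of $DF^{(n+1)}_{1,1}\binom11$ divided by $\Xi_n^x$. Since $\Xi_n^x\to\Xi_\infty^x\in(0,\infty)$ by Theorem \ref{approach} and $\Xi_n^x\ge\Xi_0^x>0$ is increasing, the prefactor $1/\Xi_n^x$ is bounded between two positive constants uniformly in $n$. Replacing $n+1$ by $n$ only changes things by a bounded factor (indeed $n^2/(n+1)^2\to1$), so $n^2\Ee_n^xN_n$ is comparable to $n^2$ times a component of $DF^n_{1,1}\binom11$.

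Combining the two steps: $n^2\Ee_n^xN_n$ is, up to factors bounded above and below by positive constants, equal to $n^2\,(1,0)\,DF^n_{1,1}\binom11$ (for $x=1$) or $n^2\,(0,1)\,DF^n_{1,1}\binom11$ (for $x=2$), and both of these lie in $[C^{-1},C]$ for $n$ large by Theorem \ref{thasymp} and the linearity remark above. For the finitely many small $n$ one just notes $\Ee_n^xN_n>0$ is a finite positive number, so enlarging $C$ covers all $n\ge1$. This yields $C^{-1}<n^2\Ee_n^xN_n<C$ as claimed.

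The only mildly delicate point — and the place I would be most careful — is bookkeeping in the translation between $\Ee_n^xN_n$ and the matrix product: checking that differentiating $\Xi_n^x(u,v)=\Xi_0^x(F^{(n)}(u,v))$ via (\ref{XinFn}) and (\ref{root}) really produces $DF^{(n+1)}_{1,1}$ acting on $\binom11$ (note $\Xi_0$ contributes the last factor $DF(1,1)$ since $\Xi_{-1}(u,v)=(u,v)$, consistent with the displayed formulas (\ref{EN1N2})), and that the scalar normalizations $1/u_{n+1}$, $1/v_{n+1}$ are exactly the $\Xi_n^x$ appearing in (\ref{LnQn}). None of this is hard, but it is where an off-by-one or a misplaced normalization could creep in; once it is pinned down, the corollary is an immediate consequence of Theorem \ref{thasymp}.
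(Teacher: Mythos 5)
Your proposal is correct and follows exactly the route the paper intends: the corollary is an immediate consequence of Theorem \ref{thasymp} (extended by linearity to the vector $\left(\begin{smallmatrix}1\\1\end{smallmatrix}\right)$) combined with the identities (\ref{LnQn})--(\ref{EN1N2}), the normalizations $\Xi_n^1=u_{n+1}$, $\Xi_n^2=v_{n+1}$ being bounded between positive constants by Theorem \ref{approach}. Your bookkeeping of the off-by-one shift $DF^{(n+1)}_{1,1}$ versus $DF^{n}_{1,1}$ and of the finitely many small $n$ is accurate, so nothing is missing.
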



\section{A simple case: $p_1=0$}
When $p_1=0$, the interaction (\ref{H}) with (\ref{22}) can be given a simpler
interpretation. Let us call active nodes the nodes $i\in\om\in\T_n$ such that
$X_i>0$, and denote $\bar A_n(\om)$ their number. When $p_1=0$ we have
\beq
-H_n^2(\om)=\bar A_n(\om)\;,\qquad -H_n^1(\om)=\bar A_n(\om)-1_{X_0>0}\,.
\eeq
Denote $\bar N_n(\om)$ the number of nodes $i\in\om$.
For $\om'\in\T_{n+1}$, let $\om(\om')\in\T_n$ be obtained from $\om'$ by removing
the nodes with $|i|=n+1$. Then
\beq
\bar N_{n+1}(\om')=\bar N_n(\om)+Q_n(\om)\,.
\eeq

When $p_1=0$, the dynamical system in Sections \ref{cvgvc} and \ref{nbnodes} collapses to a simpler one dimensional problem, for which the heuristics is easy. We now have
$$
g(y)=y+g_2y^2+O(y^3),\qquad g_2=bR''_c/2
$$
The ansatz $y_n\simeq -\gamma n^{-\alpha}$ yields
$$
\alpha=1,\ \gamma=1/g_2,\ y_n\simeq-2/(bR''_cn)\,.
$$
The mean number of external nodes in generation $n$, last generation, equals the average of $Q_n$, given by (\ref{LnQn}),
\beq
\Ee_n^x Q_n=
{1\over\Xi_n^x}{\p\Xi_n^x\over\p v}\Big|_{u=v=1}
\eeq
with (\ref{root}),
\begin{align}
\Xi_n=\left(\bmat p_0+R(\Xi_{n-1}^2)\\p_0+bR(\Xi_{n-1}^2)
\emat\right)
\end{align}
so that
\beq
{\p\Xi_n\over\p v}\Big|_{u=v=1}=
R'(\Xi_{n-1}^2){\p\Xi_{n-1}^2\over\p v}\Big|_{u=v=1}\left(\bmat1\\b\emat\right)
\eeq
\beq
{\p\Xi_n^2\over\p v}\Big|_{u=v=1}
=bR'(v_n){\partial \Xi_{n-1}^2\over\partial v}\Big|_{u=v=1}
\simeq\Bigl(bR'_c+bR''_cy_n\Bigr){\partial \Xi_{n-1}^2\over\partial v}\Big|_{u=v=1}
\eeq
Using $bR'_c=1$, inserting $y_n$ and iterating yields
\beq
{\p\Xi_n^x\over\p v}\Big|_{u=v=1}
\sim\prod_{m=1}^n(1-2/m)\sim\exp(-2\log n)\sim n^{-2}
\eeq
where $x=$ 1 or 2 and the proportionality differs by a factor $b$ according to $x$.
\section{Spin model}\label{spin}
A spin representation of a random tree gives some access to tools from the
theory of Gibbs states of spin models. Here we extend the representation from
the bounded offspring distribution \cite{DM22} to the unbounded case.
Let $n\ge 0$. The set of possible sites or nodes $i$ is
\beq
\La_n=\oplus_{n'=0}^n\,(\Ze_+)^{n'},\qquad(\Ze_+)^0=\{0\}\,.
\eeq
The origin is $i=0$. The other sites or nodes have a non-random Neveu label
$i=i_1i_2\dots i_{n'}$ for some $n'\le n$, each $i_{n''}$ is the rank of the
ancestor of $i$ in generation $n''$, for each $1\le n''\le n'$.
The rank of a node is denoted $r(i)$.
We extend the random variable $X_i(\om)$ to the whole of $\La_n$ by assigning
the value $X_i(\om)=-1$ to all the possible ``phantom'' nodes not in the tree $\om$.
The set of configurations $\chi=(X_i)_{i\in\La_n}$, where $X_i\in\{-1\}\cup\Ne$, is
\beq
\Om_n=\{\{-1\}\cup\Ne\}^{\La_n}
\eeq
It is endowed with partial order: $\chi\ge\chi'$ if and only if $X_i\ge X'_i$
for all $i$. A function $F:\Om_n\rightarrow\R$ is termed  non-decreasing if and only if $\chi\ge\chi'\Rightarrow F(\chi)\ge F(\chi')$. 

\begin{proposition}\label{spin2}
Let $n\ge1$.
By convention, let $p_{-1}=1$. Fix a boundary condition
$X_{a(0)}\in\Ze_+$. For $\chi=(X_i)_{i\in\La_n}\ \in\ \Om_n$ let
\beq\label{muGW}
\mu^{GW}(\chi)=\prod_{i\in\La_n}p_{X_i}
\Bigl(1_{X_{a(i)}\ge r(i)}1_{X_i\ge0}+1_{X_{a(i)}<r(i)}1_{X_i<0}\Bigr).
\eeq
Let $\om$ denote a tree sampled from a BGW chain at time $n$. Recall
\beq
\Pe^{GW}(\om)=\prod_{i\in\om}p_{X_i(\om)}.
\eeq
Then there is a bijection $\chi\leftrightarrow\om$ between the support of
$\mu^{GW}$ and the set of BGW trees at time $n$, and $\mu^{GW}\sim \Pe^{GW}$:
\beq\label{omchi}
\chi\mapsto \om(\chi),\quad\Pe^{GW}(\om(\chi))=\mu^{GW}(\chi)\ ;\qquad
\om\mapsto\chi(\om),\quad\mu^{GW}(\chi(\om))=\Pe^{GW}(\om)
\eeq
\end{proposition}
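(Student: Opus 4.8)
The plan is to construct the bijection explicitly and then check compatibility with the two probability weights. First I would describe the forward map $\om\mapsto\chi(\om)$: given a BGW tree $\om\in\T_n$, for each potential site $i\in\La_n$ declare $X_i(\chi)=X_i(\om)$ if $i\in\om$ and $X_i(\chi)=-1$ otherwise. One must check that the resulting $\chi$ lies in the support of $\mu^{GW}$, i.e. that for every $i\in\La_n$ the parenthesised indicator in (\ref{muGW}) equals $1$. This amounts to the statement that $i\in\om$ (equivalently $X_i\ge0$) if and only if $i$'s parent $a(i)$ is in $\om$ with enough offspring to have produced $i$, i.e. $X_{a(i)}(\om)\ge r(i)$ — which is exactly the definition of a planar rooted tree together with the convention $p_{-1}=1$, $X_{a(0)}\in\Ze_+$ handling the root. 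So $\mu^{GW}(\chi(\om))=\prod_{i\in\La_n}p_{X_i(\chi)}=\prod_{i\in\om}p_{X_i(\om)}\cdot\prod_{i\notin\om}p_{-1}=\Pe^{GW}(\om)$, the second equality because $p_{-1}=1$.

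Next I would give the inverse map $\chi\mapsto\om(\chi)$ on the support of $\mu^{GW}$: let $\om(\chi)=\{i\in\La_n:X_i\ge0\}$, equipped with offspring numbers $X_i(\om)=X_i(\chi)$. The key point is that this set really is a planar rooted tree of height $\le n+1$, which follows by induction on generation: the root is in $\om$ since $X_{a(0)}\ge r(0)$ is forced (or rather $X_0\ge0$ by the boundary convention $p_{-1}=1$ excluding $X_0=-1$ from the support); and if $i\in\om$ then by (\ref{muGW}) its children are exactly those $j$ with $r(j)\le X_i$, which is the branching rule, while if $i\notin\om$ then $X_{a(i)}<r(i)$ forces $X_j<0$ for all descendants $j$ of $i$, so no phantom node spawns a real subtree. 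This shows $\om(\chi)\in\T_n$ and that the two maps are mutually inverse on the relevant sets. The weight identity $\Pe^{GW}(\om(\chi))=\mu^{GW}(\chi)$ is then the same computation as above read backwards.

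The only mildly delicate step is verifying that the support of $\mu^{GW}$ consists precisely of these tree-configurations and nothing more — in particular that the indicator structure in (\ref{muGW}) is \emph{consistent}, i.e. there is no $\chi$ with $\mu^{GW}(\chi)>0$ that fails to come from a tree. This is where one uses that the constraint propagates generation by generation: $X_{a(i)}\ge r(i)\Leftrightarrow X_i\ge0$ pins down membership in $\om$ from the previous generation, so a configuration with positive weight is determined on generation $n'+1$ by its restriction to generation $n'$ together with the free choice of $X_i\in\Ne$ for the $i\in\om$ of generation $n'$; an easy induction then shows the generation-$(n'+1)$ part is forced outside $\om$ and free inside, which is exactly the data of a BGW tree. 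I expect this bookkeeping — matching the Neveu labelling, the rank function $r(i)$, and the boundary convention at the root — to be the main (though routine) obstacle; everything else is immediate once the indicator identity $X_i\ge0\Leftrightarrow X_{a(i)}\ge r(i)$ on the support is established.
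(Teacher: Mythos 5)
Your proposal is correct and matches the paper's intended argument: the paper's proof simply defers to the bounded-offspring case in \cite{DM22} and records the one key observation you also identify, namely that on the support the constraint $X_{a(i)}<r(i)\Rightarrow X_i=-1$ propagates down the generations, so that all but finitely many spins are phantom and the (infinite) product in (\ref{muGW}) reduces, via $p_{-1}=1$, to $\prod_{i\in\om}p_{X_i}$. You have essentially written out in full the bookkeeping that the paper leaves to the cited reference.
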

\begin{proof}
This proposition extends to unbounded offspring distributions a similar result for bounded distributions \cite{DM22}. The proof is essentially the same.
Note that $r(i)>X_{a(i)}\Rightarrow X_i=-1$. 
Therefore $\mu-$almost surely all but a finite number of $X_i$ take value $-1$.
\end{proof}
Proposition \ref{spin2} allows to prove correlation inequalities \cite{FV17}.
In particular:
\begin{theorem}\label{FKG}(FKG inequality)
Let $n\ge0$ and $x\in\Ze_+$. Assume (\ref{H}) 
and, for all $X,Y,X',Y'\in\{-1\}\cup\Ne$
\beq\label{phifkg}
\vphi(X,Y)+\vphi(X',Y')\ge
\vphi(X\we X',Y\we Y')+\vphi(X\vee X',Y\vee Y').
\eeq
Then for any non-decreasing functions $F,G:\Om_n\rightarrow\R$
\beq\label{fkg}
\Ee_n^x(FG)-(\Ee_n^xF)(\Ee_n^xG)\ge0.
\eeq
\end{theorem}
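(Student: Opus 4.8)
\textbf{Proof plan for Theorem \ref{FKG}.} The plan is to reduce the FKG inequality for $\Ee_n^x$ to the classical Holley--FKG lattice condition for the measure on $\Om_n$ obtained by tilting $\mu^{GW}$ by the Boltzmann weight. By Proposition \ref{spin2} the interacting measure $\Pe_n^x$ transported to $\Om_n$ is
\beq
\nu_n^x(\chi)=\bigl(\Xi_n^x\bigr)^{-1}\,\mu^{GW}(\chi)\,e^{-\be H_n^x(\om(\chi))}\,,
\eeq
where on the support of $\mu^{GW}$ one has $H_n^x(\om(\chi))=\sum_{i\in\La_n}\vphi(X_{a(i)},X_i)\,1_{X_i\ge0}$ (the phantom nodes, with $X_i=-1$, contribute nothing, which is why the convention $p_{-1}=1$ and the form (\ref{muGW}) are chosen so that the product over $i\in\La_n$ restricts automatically to $i\in\om$). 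Since any monotone observable on $\Om_n$ pairs against $\nu_n^x$ exactly as the corresponding observable on $\T_n$ pairs against $\Pe_n^x$, it suffices to verify that $\nu_n^x$ satisfies the FKG lattice condition on the finite distributive lattice $\Om_n=(\{-1\}\cup\Ne)^{\La_n}$ with the coordinatewise order, i.e. for all $\chi,\chi'$,
\beq
\nu_n^x(\chi)\,\nu_n^x(\chi')\le\nu_n^x(\chi\vee\chi')\,\nu_n^x(\chi\we\chi')\,.
\eeq

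First I would check the lattice condition for the reference measure $\mu^{GW}$ itself. It factorizes over nodes as a product of one-body factors $p_{X_i}$ (which trivially satisfy the condition, being functions of a single coordinate) times pair factors of the form
\beq
K(X_{a(i)},X_i)=1_{X_{a(i)}\ge r(i)}1_{X_i\ge0}+1_{X_{a(i)}<r(i)}1_{X_i<0}\,.
\eeq
For each fixed rank $r=r(i)$ this $K$ is, as a two-variable function, $1_{\{X_{a(i)}\ge r,\,X_i\ge 0\}}+1_{\{X_{a(i)}<r,\,X_i<0\}}$; one checks directly that its logarithm (with the convention $\log 0=-\infty$, which is harmless because configurations off the support get zero weight and the inequality is preserved under taking limits / working in $[0,\infty)$) is supermodular: the only way to violate $K(a,b)K(a',b')\le K(a\vee a',b\vee b')K(a\we a',b\we b')$ would be $K(a\vee a',\cdot)=0$ or $K(a\we a',\cdot)=0$ while both left factors are $1$, and a short case analysis on whether $a,a'$ are $\ge r$ or $<r$ and $b,b'$ are $\ge0$ or $<0$ rules this out. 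Hence $\mu^{GW}$ satisfies the lattice condition. This step is essentially the content of the bounded-support result in \cite{DM22}, now with the phantom value $-1$ incorporated.

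Second, I would handle the Boltzmann factor $e^{-\be H_n^x(\om(\chi))}$. Writing $H_n^x$ on the support as $\sum_i\vphi(X_{a(i)},X_i)1_{X_i\ge0}$, and noting that $1_{X_i\ge 0}=1-K(X_{a(i)},X_i)+\text{(support correction)}$ is, on the support, determined by the pair $(X_{a(i)},X_i)$, the energy is a sum of pair terms along the edges of the tree $\La_n$. The lattice condition for the tilted measure then reduces, term by term over edges, to the requirement that each function $(X,Y)\mapsto -\be\,\vphi(X,Y)\,1_{Y\ge0}$ be supermodular, i.e. that $\vphi(X,Y)1_{Y\ge0}+\vphi(X',Y')1_{Y'\ge0}\ge \vphi(X\we X',Y\we Y')1_{Y\we Y'\ge0}+\vphi(X\vee X',Y\vee Y')1_{Y\vee Y'\ge0}$; and since $-1$ is the bottom element, for the pair interaction (\ref{22}) the indicator $1_{Y\ge0}$ is itself monotone and $\vphi\le 0$, so this follows from hypothesis (\ref{phifkg}) together with the fact that $\vphi(X,Y)\le 0=\vphi(\cdot,-1)$ handles the meet when $Y\we Y'=-1$. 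For the general $\vphi$ stated in the theorem one simply invokes (\ref{phifkg}) directly edge by edge, using that sums of supermodular pair functions over the edges of a graph remain supermodular on the product lattice.

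Finally, with $\log\nu_n^x$ supermodular (a sum of one-body terms and supermodular pair terms, the normalization $\Xi_n^x$ being an irrelevant constant), the Holley--FKG theorem on a finite distributive lattice (see \cite{FV17}) yields (\ref{fkg}) for all bounded non-decreasing $F,G$, and then for all non-decreasing $F,G$ with $\Ee_n^x(|F|),\Ee_n^x(|G|),\Ee_n^x(|FG|)<\infty$ by truncation and monotone/dominated convergence; note $\Om_n$ being a countable product of countable sets, measurability and the monotone class argument are routine. The main obstacle is the second step: correctly bookkeeping the phantom value $-1$ so that the energy really is a sum of \emph{pair} terms on the support of $\mu^{GW}$ and that no spurious non-supermodular contribution arises from the indicator $1_{X_i\ge0}$ interacting with the constraint $K$; once one sees that on the support $H_n^x$ depends only on the edge pairs and that $-1$ sits at the bottom of the lattice, hypothesis (\ref{phifkg}) plugs in cleanly. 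The extension to $X,Y$ ranging over more than two values beyond the $0/\ge 2$ dichotomy is immediate since nowhere did we use that the offspring alphabet was small --- only the lattice structure of $\{-1\}\cup\Ne$ and supermodularity of $\vphi$.
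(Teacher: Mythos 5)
Your plan is correct and follows exactly the route the paper intends: Proposition \ref{spin2} transports the tilted measure to the product lattice $\Om_n$, where the Holley/FKG lattice condition is checked factor by factor (one-body weights $p_{X_i}$, the pair constraints in (\ref{muGW}), and the pair energies), the paper itself delegating these details to \cite{DM22} and \cite{FV17}. The only points to tidy up are that $\Om_n$ is not a finite lattice (so the truncation/limit argument you sketch at the end is genuinely needed, and is the substance of the extension to unbounded offspring), and that identifying $H_n^x$ with a sum of pair terms over all of $\La_n$ presupposes that $\vphi$ is extended by zero when an argument equals $-1$, which is the implicit convention behind hypothesis (\ref{phifkg}) and makes your $\vphi(X,Y)1_{Y\ge0}$ coincide with $\vphi$ itself.
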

\begin{remark}
An example obeying (\ref{phifkg}) is
\beq
\vphi(X,Y)=-1_{X\ge k}1_{Y\ge l},\qquad k,l\in\Ne
\eeq
\end{remark}
\begin{remark}
For $\be=0$, Theorem \ref{FKG} is a statement about the usual BGW Markov chain. A trivial example goes as follows: let $p_0+p_2=1$ and $n=1$. The set of possible nodes in Neveu notation is $\{0,1,2\}$. There are 5 states of respective probabilities $(p_0,p_0^2p_2,p_0p_2^2,p_0p_2^2,p_2^3)$
and
$$
\Ee\, X_1X_2=p_0+4p_2^3\ge(\Ee\, X_1)(\Ee\, X_2)=(-p_0+2p_2^2)^2,
$$
or also
$$
\Ee\, X_11_{X_1\ge 0}X_21_{X_2\ge 0}=4p_2^3\ge(\Ee\, X_11_{X_1\ge 0})(\Ee\, X_21_{X_2\ge 0})=4p_2^4.
$$

\end{remark}
In the sub-critical or critical regimes with interaction, the resulting monotonicity in $n$ could be used to prove convergence as $n\nea\infty$ of all moments of positive increasing functions of the $X_i$'s, similarly to Theorem 1 in \cite{DM22},
assuming suitable bounds on these moments.
{                       \begin{proof}
Here we prove inequality (\ref{EN1N2}), $\Ee_n^1N_n\le\Ee_n^2N_n$. We have
\beq
H_n^2(\om)=H_n^1(\om)-1_{X_0\ge2}(\om)
\eeq
Consider for $\la\in[0,1]$
\beq
H_n^{2,\la}(\om)=H_n^1(\om)-\la1_{X_0\ge2}(\om)
\eeq
so that
\beq
H_n^{2,0}(\om)=H_n^1(\om)\,,\qquad
H_n^{2,1}(\om)=H_n^2(\om)
\eeq
Let us rewrite (\ref{Nn}) as
\beq\label{Nnspin}
N_n=\sum_{|i|=n}X_{i_1\dots i_n}1_{X_i>0}
\eeq
Then by the FKG inequality
\beq
{d\over d\la}\Ee_n^{2,\la}N_n=\Ee_n^{2,\la}N_n1_{X_0\ge2}-\Ee_n^{2,\la}N_n\Ee_n^{2,\la}1_{X_0\ge2}\ge0
\eeq
and
\beq
\Ee_n^2N_n-\Ee_n^1N_n=\int_0^1d\la{d\over d\la}\Ee_n^{2,\la}N_n\ge0
\eeq
\end{proof}
}
\medskip\noindent Acknowledgements:
F.D and T.H. acknowledge partial support from the labex MME-DII Center of Excellence (Modèles mathématiques et
économiques de la dynamique, de l’incertitude et des interactions, ANR-11-LABX-0023-01 project).

\medskip\noindent Data Availability: There are no data associated with this paper.

\medskip\noindent Conflict of interest: The authors have no conflicts of interest associated with this paper.



\begin{thebibliography}{00}

\bibitem{Steele} J. M. Steele: {\sl Gibbs'measures on combinatorial objects and the central limit theorem for an exponential family}, Probability in the Engineering and informational sciences, {\bf 1}, 47-59 (1987).

\bibitem{Battle} G. A. Battle and L. Rosen: {\sl The FKG inequality for the
Yukawa 2 quantum field theory}, J. Stat. Phys. {\bf 22}, 123-192 (1980).

\bibitem{Berger} Q. Berger, F. Caravenna and P. Dai Pra: {\it Introduction aux Probabilites}, Dunod, 2021.

\bibitem{Birkhoff} G. Birkhoff: \textsl{Lattice
    Theory}. Amer. Math. Soc. 1967.

\bibitem{Com} L. Comtet: {\it Analyse Combinatoire,} Tome 1.
Presses Universitaires de France, Paris, 1970.

\bibitem{DM22} F. Dunlop, A. Mardin: {\sl Galton-Watson trees with first ancestor interaction}, 
    J. Stat. Phys. {\bf 189}, 38 (2022).
    
\bibitem{FV17} S. Friedli, Y. Velenik: {\sl Statistical Mechanics of Lattice Systems: a Concrete Mathematical Introduction}.  Cambridge University Press, 2017.

\bibitem{GH83} J. Guckenheimer, P. Holmes: \textsl{Nonlinear Oscillations, Dynamical Systems, 
and Bifurcations of Vector Fields}. Springer 1983.

\bibitem{K98} Y. Kuznetsov: \textsl{Elements of Applied
Bifurcation Theory}. Springer 1998.

\bibitem{LN} B. Lemmens, R. Nussbaum: {\sl Birkhoff's version of Hilbert's
  metric and its applications in analysis}. In 
\textsl{Handbook of Hilbert geometry}, ed. G. Besson, A. Papadopoulos
and M. Troyanov,  European Mathematical Society Publishing House, 2014. 
    
\bibitem{N86} J. Neveu: {\sl Arbres et processus de Galton-Watson}, Ann. Inst. 
Henri Poincar\'e Probab. Stat. {\bf 22}, 199-207 (1986).

\bibitem{Stan}  R. P. Stanley: {\sl Enumerative Combinatorics}, vol. 2,
Cambridge, CUP, 1999, chap. 5.

\bibitem{T71} F. Takens: {\sl Partially hyperbolic fixed points}, 
Topology {\bf 10}, 133-147 (1971).













\end{thebibliography}
\end{document}